\newtheorem{definition}{Definition}[section]
\newtheorem{theorem}{Theorem}[section]
\newtheorem{lemma}{Lemma}[section]
\def\tr{\mathrm{tr}}
\def\l{\lambda}
\def\R{\mathbb R}
\def\<{\langle}
\def\>{\rangle}
\def\div{{\rm div}}
\def \ds{\displaystyle}
\def \vs{\vspace*{0.1cm}}
\begin{document}

\title[A sharp lower bound for the first eigenvalue]{A sharp lower bound for the first eigenvalue on Finsler manifolds}%[Finsler Liouville equation]
\author{Guofang Wang}
\address{ Albert-Ludwigs-Universit\"at Freiburg,
Mathematisches Institut,
Eckerstr. 1,
79104 Freiburg, Germany}
\email{guofang.wang@math.uni-freiburg.de}
\thanks{GW is partly supported by SFB/TR71 ``Geometric partial differential equations'' of DFG. CX is supported by China Scholarship Council.}
\author{Chao Xia}\address{Albert-Ludwigs-Universit\"at Freiburg, Mathematisches Institut,  Eckerstr. 1, 79104 Freiburg, Germany}
\email{chao.xia@math.uni-freiburg.de}

\date{}

%\maketitle

\begin{abstract}{ 
In this paper, we give a sharp lower bound for the first (nonzero) Neumann eigenvalue of Finsler-Laplacian in Finsler manifolds in terms of  diameter, dimension, weighted Ricci curvature.
}

\

%\noindent{\rm Keywords:} Finsler-Laplacian, first eigenvalue, weighted Ricci curvature, Poincar\'e inequality

\noindent This paper is  dedicated to Professor S. S. Chern  on the occasion of his 100th birthday

%\noindent{\rm 2010 Mathematics Subject Classification:}  35P15, 53C60,  35A23

\end{abstract}
%This paper is  dedicated to Professor S. S. Chern  on the occasion of his 100th birthday

\medskip
\subjclass[2000]{35P15, 53C60,  35A23}
\keywords{Finsler-Laplacian, first eigenvalue, weighted Ricci curvature, Poincar\'e inequality}

\maketitle
%\keywords{Finsler-Laplacian, first eigenvalue, weighted Ricci curvature, Poincar\'e inequality}

\section{Introduction}

The study of  the first (nonzero) eigenvalue of Laplacian in Riemannian manifolds plays an important
 role
  in  differential geometry. The first result on this subject, due to Lichnerowicz \cite{Lic},
 says that for an $n$-dimensional smooth compact manifold without boundary,  the first eigenvalue 
$\lambda_1$ can be estimated below by $\frac{n}{n-1}K$, provided that its $Ric\geq K>0$. 
In this case, Obata \cite{O} established a rigidity result, asserting the optimality 
of Lichnerowicz' estimate. Namely, $\lambda_1=K$ if and only if $M$ is isometric to the $n$ 
dimensional sphere with constant curvature $\frac{1}{n-1}K$. When $K=0$, Li-Yau \cite{Li,LY} developed a method,
 which depends on the gradient estimate of the eigenfunctions, to give the lower bound of the first 
eigenvalue via diameter $d$, precisely, $\lambda_1\geq \frac{\pi^2}{2d^2}$. Their method had been improved by 
Zhong-Yang \cite{ZY} to obtain $\lambda_1\geq \frac{\pi^2}{d^2}$, which is optimal in the sense 
that equality can be attained for one dimensional circle. Very recently, 
Hang and Wang showed that $\l_1>\frac {\pi^2}{d^2}$ in \cite{HW}, if the dimension $n>1$.
 These results also hold true when $M$ is a manifold with convex boundary. 
When $M$ is a convex domain in $\R^n$, this is a classical result of Payne-Weinberger \cite{PW}. 
Later Chen-Wang \cite{CW} and
 Bakry-Qian \cite{BQ} combined these results into a same framework, and gave  
estimates for the first eigenvalue of  very general elliptic symmetric operators, 
via diameter and Ricci curvature. 
This sharp estimate on Riemannian manifolds
 has been also generalized to Alexandrov spaces by Qian-Zhang-Zhu \cite{QZZ}.

Finsler geometry attracts many attentions in recent years, since it has broader applications in nature science. Simultaneously Finsler manifold is one of the most natural metric measure spaces, which plays an important role in many aspects in mathematics.
There exists a natural Laplacian on Finsler manifolds, which we call here \textit{Finsler-Laplacian}. 
Unlike the usual Laplacian, the Finsler-Laplacian is a nonlinear operator. %This somehow arises difficulties when we deal with problems in Finsler geometry.
The objective  of this paper is to study the lower bound for the first (nonzero) eigenvalue of
 this Finsler-Laplacian on Finsler manifolds. In \cite{Oh} Ohta  
introduced the weighted Ricci curvature $Ric_N$ for $N\in [n,\infty]$ of Finsler manifolds,
following the work of Lott-Villani \cite{LV} and Sturm \cite{S1} on metric measure space.
He proved the equivalence of the lower lower boundedness of the $Ric_N$ and
 the curvature-dimension conditions $CD(K,N)$ in \cite{LV,S1}. 
As a byproduct, he obtained a Lichnerowicz type estimate on the first eigenvalue 
of Finsler-Laplacian under the assumption $Ric_N\geq K>0$.
Another interesting type of
 eigenvalue estimates was obtained by Ge-Shen in \cite{GS}, namely the Faber-Krahn type inequality for the first Dirichlet eigenvalue of the Finsler-Laplacian holds. 
See also  \cite{BFK} and \cite{WuX}.
Recently, we proved in \cite{WX}
 the Li-Yau-Zhong-Yang type sharp estimate for a so-called anisotropic Laplacian
on a Minkowski space, which could be viewed as
 the simplest, but interesting and important
 case of non-Riemannian Finsler manifolds.
In this paper, we shall generalize 
the results in \cite{WX} to general Finsler manifolds. Moreover, as in \cite{CW} and \cite{BQ}, 
we shall put the Li-Yau-Zhong-Yang type and the Lichnerowicz type sharp estimates into a uniform framework. 

\

Our main result of this paper is
\begin{theorem}\label{main thm} Let $(M^n,F,m)$ be a $n$-dimensional compact Finsler measure space,  equipped with a Finsler structure $F$ and a smooth measure $m$, without boundary or with a convex boundary. Assume that $Ric_N\geq K$ for some real numbers $N\in [n,+\infty]$ and $K\in \mathbb{R}$.
Let $\lambda_1$  be the first (nonzero) Neumann eigenvalue of the Finsler-Laplacian $\Delta_m$, i.e.,
\begin{equation}\label{first eigenvalue}
-\Delta_m u=\lambda_1 u, \hbox{ in } M,
\end{equation}
with a Neumann boundary condition
\begin{equation}\label{eq2}
 \nabla u(x)\in T_x(\partial M), %\hbox{ when } \partial M\neq \emptyset,
 %\end{array}\right.
\end{equation}
if $\partial M$ is not empty.
Then
\begin{eqnarray}
\lambda_1\geq \lambda_1(K,N,d),
\end{eqnarray}
where $d$ is the diameter of $M$, $\lambda_1(K,N,d)$ represents the first (nonzero) eigenvalue of the $1$-dimensional problem
\begin{eqnarray}
v''-T(t)v'=-\lambda_1(K,N,d) v \hbox{ in }(-\frac{d}{2},\frac{d}{2}),\quad v'(-\frac{d}{2})=v'(-\frac{d}{2})=0,
\end{eqnarray}
with $T(t)$ varying according to different values of $K$ and $N$. $T$ is explicitly defined by
\begin{equation}\label{T}
T(t)=\left\{
\begin{array}{lll} \ds\vs
\sqrt{(N-1)K}\tan\left(\sqrt{\frac{K}{N-1}}t\right), &\hbox{ for }K>0, 1<N<\infty,\\
\ds\vs -\sqrt{-(N-1)K}\tanh\left(\sqrt{-\frac{K}{N-1}}t\right), &\hbox{ for }K<0, 1<N<\infty,\\
0, &\vs \hbox{ for }K=0,1<N<\infty,\\
Kt, &\hbox{ for }N=\infty.
\end{array}\right.
\end{equation}
\end{theorem}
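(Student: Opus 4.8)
The plan is to carry the gradient--comparison method of Kr\"oger and Bakry--Qian (compare also Chen--Wang and our earlier \cite{WX}), which treats the Li--Yau--Zhong--Yang and Lichnerowicz estimates uniformly, over to the Finsler setting; the new ingredients are the Ohta--Sturm Bochner--Weitzenb\"ock formula for $\Delta_m$ and the Finsler comparison geometry behind $Ric_N\ge K$. The whole scheme is organized around the elementary fact that the profile $T$ in \eqref{T} is precisely the one for which the weighted one--dimensional equation $v''-Tv'=-\lambda v$ is the $CD(K,N)$ model, i.e. $T'=K+\frac{T^2}{N-1}$ when $1<N<\infty$ and $T'=K$ when $N=\infty$. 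We fix a first eigenfunction $u$ with $-\Delta_m u=\lambda_1 u$; after scaling and, if necessary, after replacing $(F,u)$ by $(\overleftarrow F,-u)$ with $\overleftarrow F(v):=F(-v)$ (which changes neither $Ric_N$, nor the diameter $d$, nor $\lambda_1$), we may assume $\min_M u=-1$ and $\max_M u=\beta\ge1$. All pointwise computations are carried out on $M_u:=\{\nabla u\neq0\}$, where $u$ is smooth and $\Delta_m$ agrees with a locally uniformly elliptic linear operator $\Delta^{\nabla u}$ with smooth coefficients; the set $\{\nabla u=0\}$ is handled by approximation and continuity, as in Ohta--Sturm.

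The heart of the argument is a gradient comparison. With $\lambda:=\lambda_1$, let $\psi$ solve $\psi''-T\psi'=-\lambda\psi$ on the maximal interval where $\psi'>0$, normalized so that $[-1,\beta]$ lies in its range, and put $\mathrm g:=\psi'\circ\psi^{-1}$. The aim is the pointwise estimate
\[
F(\nabla u)(x)\ \le\ \mathrm g\big(u(x)\big)\qquad(x\in M).
\]
If it failed, $Q:=F(\nabla u)^2-\mathrm g(u)^2$ would attain a positive maximum at some $x_0\in M_u$. Applying $\Delta^{\nabla u}$ to $Q$ at $x_0$ and combining $\Delta^{\nabla u}Q(x_0)\le0$ with the Ohta--Sturm inequality
\[
\Delta^{\nabla u}\!\Big(\tfrac12F(\nabla u)^2\Big)-D(\Delta_m u)(\nabla u)\ \ge\ Ric_N(\nabla u)+\frac{(\Delta_m u)^2}{N}
\]
(the last term omitted when $N=\infty$, and sharpened at $x_0$ because there $\nabla u$ is an eigendirection of $\nabla^2u$, which is what brings in the dimensional factor $N-1$), with $\Delta_m u=-\lambda u$, the bound $Ric_N(\nabla u)\ge K\,F(\nabla u)^2$, the first--order identity $DQ(x_0)=0$, and finally the Riccati identity for $T$, one reaches a contradiction --- exactly as in \cite{WX} and Bakry--Qian, with a Zhong--Yang type perturbation of $\mathrm g$ when the range $[-1,\beta]$ is not symmetric. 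This is the step in which $Ric_N\ge K$ and the specific choice of $T$ are used in an essential way.

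Granting the gradient estimate, I would conclude by integration. Pick $p,q\in M$ with $u(p)=-1$ and $u(q)=\beta$, and a unit--speed minimizing geodesic $\gamma\colon[0,L]\to M$ from $p$ to $q$, so $L=d(p,q)\le d$. Since $(u\circ\gamma)'(t)=du(\dot\gamma(t))\le F^*(du)(\gamma(t))\,F(\dot\gamma(t))=F(\nabla u)(\gamma(t))\le\mathrm g\big(u(\gamma(t))\big)$, the function $t\mapsto\psi^{-1}(u(\gamma(t)))$ has derivative $\le1$, hence
\[
\psi^{-1}(\beta)-\psi^{-1}(-1)\ \le\ L\ \le\ d ,
\]
i.e. the model solution $\psi$ increases from $-1$ to $\beta$ across an interval of length at most $d$. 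A purely one--dimensional argument --- Sturm comparison for the model equation, the asymmetry of $[-1,\beta]$ being reduced in the Zhong--Yang/Bakry--Qian manner to the symmetric Neumann problem on $(-\tfrac d2,\tfrac d2)$ --- then shows this is impossible unless $\lambda\ge\lambda_1(K,N,d)$, which is the assertion; the equality case is rigid.

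I expect the gradient--comparison step to be the main obstacle: one must push the Ohta--Sturm Bochner formula through the maximum--principle computation so that precisely the terms encoded by the Riccati equation for $T$ survive, while simultaneously coping with the non-reversibility of $F$, the weak regularity of $\Delta_m$ on $\{\nabla u=0\}$, and the Zhong--Yang perturbation required for an asymmetric range. The concluding one-dimensional analysis is delicate but classical.
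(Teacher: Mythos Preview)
Your overall architecture---gradient comparison via a Bochner/maximum-principle argument, followed by integration along a minimizing geodesic and reduction to the symmetric one-dimensional Neumann problem---is correct and is exactly the route the paper takes, following Bakry--Qian. However, two substantive steps are missing from your plan, and both are places where the paper does real work.

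\textbf{The existence of the model function $\psi$.} You write ``let $\psi$ solve $\psi''-T\psi'=-\lambda\psi$ \dots\ normalized so that $[-1,\beta]$ lies in its range.'' This is not free: for the operator $L_{K,N}$ the admissible ratios $\max v/|\min v|$ of Neumann eigenfunctions with eigenvalue $\lambda$ sweep out only the interval $[m_{K,N},1/m_{K,N}]$ (Bakry--Qian, reproduced in the paper as Lemma~\ref{lem1 BQ}). To know that the range of $u$ can be matched one must first prove a \emph{maximum comparison}: if $\min u=-1$ then $\max u\ge m_{K,N}$. The paper isolates this as a separate theorem (Theorem~\ref{Comparison thm2}), and its proof is not a formality: one compares $m(\{u\le -1+\varepsilon\})$ with $\mu_{K,N}(\{v\le -1+\varepsilon\})$ and uses the Finsler Bishop--Gromov volume comparison under $Ric_N\ge K$ to force a contradiction. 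A genuinely Finsler wrinkle appears here: because $u$ is only $C^{1,\alpha}$ near critical points, the Riemannian argument via bounded Hessian is unavailable, and the paper instead feeds the already-proved gradient comparison back into the estimate of $m(\{u\le -1+\varepsilon\})$. Your outline also silently assumes $\lambda_1>\max\{KN/(N-1),0\}$; the paper obtains this from Ohta's Lichnerowicz bound and a perturbation $\tilde K<K$.

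\textbf{The boundary case.} You do not say what happens when the auxiliary function attains its maximum on $\partial M$. In the Riemannian setting convexity of $\partial M$ gives the needed sign directly, but here the convexity hypothesis is formulated with respect to the metric $g_\nu$ induced by the (Finsler) unit normal $\nu$, whereas the computation at $x_0$ lives in $g_{\nabla u}$. The paper bridges this with two short lemmas (Lemmas~\ref{lem1} and \ref{lem2}) showing that the half-spaces determined by $\nu$ and by the $g_{\nabla u}$-normal $\nu_{\nabla u}$ coincide, so that $g_\nu(\nu,D_{\nabla u}^{\nabla u}\nabla u)\le 0$ forces $g_{\nabla u}(\nu_{\nabla u},D_{\nabla u}^{\nabla u}\nabla u)\le 0$; combined with the Neumann condition this yields $D P(\nu_{\nabla u})(x_0)=0$ and reduces to the interior case. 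This is a point where the nonlinearity of $F$ genuinely intervenes, and it should be named in your plan rather than absorbed into ``the maximum-principle computation.''

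A minor remark on presentation: the paper uses the Bakry--Qian $P$-function $P=\psi(u)\big(\tfrac12F(\nabla u)^2-\phi(u)\big)$ together with the refined integral Bochner inequality \eqref{refined bochner}, rather than your bare $Q=F(\nabla u)^2-\mathrm g(u)^2$; the extra factor $\psi$ is precisely what makes the coefficients $a_1,a_2$ positive for general $K$ and replaces the ad hoc ``Zhong--Yang perturbation'' you allude to.
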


%\begin{Remark}From the proof below, we can easily see that  the condition of $F$ can be relaxed to be asymmetric norm, which means that we only have the homogeneity as
%$F(t\xi)=tF(\xi)$  for any  $t\geq 0$, $\xi\in \mathbb{R}^n.$
%\end{Remark}

The precise definition of the Finsler measure space, convex boundary, diameter $d$, weighted Ricci curvature $Ric_N$, gradient vector field 
$\nabla$, Finsler-Laplacian $\Delta_m$ will be given in Section 2 below.

Equivalently, Theorem \ref{main thm} gives an optimal Poincar\'e inequality in Finsler manifolds.
\begin{theorem} Under the same assumptions as in Theorem \ref{main thm}, we have
\begin{eqnarray}
\int_M F^2(\nabla u) dm\geq  \lambda_1(K,N,d) \int_M (u-\bar u)^2 dm,\end{eqnarray}
where $\bar u$ is the average of $u$.
\end{theorem}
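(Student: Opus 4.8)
The plan is to read off this theorem from Theorem~\ref{main thm} via the variational (Rayleigh quotient) description of the first nonzero Neumann eigenvalue of $\Delta_m$. I would first record, as part of the discussion of $\Delta_m$ in Section~2, the characterization
\[
\lambda_1=\inf\left\{\frac{\int_M F^2(\nabla u)\,dm}{\int_M u^2\,dm}\ :\ u\in W^{1,2}(M),\ \int_M u\,dm=0,\ u\not\equiv 0\right\},
\]
the infimum being attained by a sufficiently regular eigenfunction. Two Finsler-specific facts stand behind this: the pointwise identity $du(\nabla u)=F^2(\nabla u)$, which together with the divergence theorem and the Neumann condition \eqref{eq2} (which annihilates the boundary term) yields $-\int_M u\,\Delta_m u\,dm=\int_M F^2(\nabla u)\,dm$; and the fact that integrating \eqref{first eigenvalue} over $M$ forces $\int_M u\,dm=0$ as soon as $\lambda_1\neq 0$, so that every genuine eigenfunction is $dm$-mean-zero and the Rayleigh quotient of an eigenfunction equals its eigenvalue.

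Granting this, the inequality is immediate. Let $u\in W^{1,2}(M)$ be arbitrary. If $u$ is $dm$-a.e.\ constant, both sides vanish; otherwise set $w:=u-\bar u$, so $\int_M w\,dm=0$ and $w\not\equiv 0$. Because the differential of a constant vanishes and $\nabla$ depends on $u$ only through $du$ (through the fibrewise Legendre transform), we have $\nabla w=\nabla u$ and hence $F^2(\nabla w)=F^2(\nabla u)$ pointwise — this uses only that adding a constant leaves $\nabla u$ unchanged, which holds even though $\nabla$ is nonlinear. Applying the characterization above to $w$ and then Theorem~\ref{main thm},
\[
\int_M F^2(\nabla u)\,dm=\int_M F^2(\nabla w)\,dm\ \ge\ \lambda_1\int_M w^2\,dm\ \ge\ \lambda_1(K,N,d)\int_M(u-\bar u)^2\,dm,
\]
which is the assertion. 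For the reverse implication, which makes the two statements equivalent, one tests the Poincar\'e inequality on a first eigenfunction $u_1$: it has $\bar u_1=0$ and, by the identity above, $\int_M F^2(\nabla u_1)\,dm=\lambda_1\int_M u_1^2\,dm$, so the inequality returns $\lambda_1\ge\lambda_1(K,N,d)$; thus the optimal constant in the Poincar\'e inequality is precisely $\lambda_1$, and its sharpness is exactly the sharpness of Theorem~\ref{main thm}.

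The delicate point — and the only real obstacle — is the variational characterization itself in the genuinely nonlinear Finsler setting: one must check that the Rayleigh-quotient infimum over $dm$-mean-zero functions is attained, that the minimizer is regular enough to solve \eqref{first eigenvalue}--\eqref{eq2} in the classical sense, and that the eigenvalue so produced is indeed the first nonzero one. Here the possible non-reversibility of $F$ (so that $\int_M F^2(\nabla(-u))\,dm\neq\int_M F^2(\nabla u)\,dm$ in general, and $-u$ need not be an eigenfunction when $u$ is) must be treated with some care. Once this set-up is in place — or quoted from Section~2 and from \cite{GS,Oh} — the theorem reduces to the two displayed estimates above.
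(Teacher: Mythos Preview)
Your proposal is correct and matches the paper's approach exactly: the paper presents this theorem simply as an equivalent reformulation of Theorem~\ref{main thm} (introduced with the single word ``Equivalently'') and gives no separate proof, implicitly relying on precisely the Rayleigh-quotient characterization of $\lambda_1$ that you spell out. Your added detail --- that $\nabla(u-\bar u)=\nabla u$ because $\nabla$ depends on $u$ only through $Du$, and your remarks on regularity and non-reversibility --- fills in what the paper leaves tacit (and defers to \cite{GS}), but the route is the same.
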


In the case of $K>0$ and $N=n$, Theorem \ref{main thm} sharpens the 
 Lichnerowicz type estimate given by Ohta \cite{Oh},
 since by Meyer Theorem, $d\leq \frac{\pi}{\sqrt{(n-1)K}}$,
 and  $\lambda_1(K,N,d)\geq \lambda_1(K,n,\frac{\pi}{\sqrt{(n-1)K}})= \frac{nK}{n-1}$. 
In the case of $K=0$ and $N=n$, Theorem \ref{main thm} gives
 the Li-Yau-Zhong-Yang type sharp estimate for the Finsler-Laplacian,
since $\lambda_1(0,n,d)=\frac{\pi^2}{d^2}.$ We remark that the Minkowski space $(\mathbb{R}^n,F)$ equipped with the $n$-dimensional Lebesgue measure 
satisfies that $Ric_N\geq K$ with $K=0$ and $N=n$ (see e.g. \cite{Vi} Theorem in page 908 and \cite{Oh}, Theorem 1.2), hence Theorem \ref{main thm} covers the estimate in \cite{WX}.

Our proof goes along the line of Bakry-Qian \cite{BQ}. The 
technique is based on a comparison theorem on the gradient of the first 
eigenfunction with that of  a one dimensional (1-D) model function (Theorem \ref{Comparison thm}), 
which was developed by Kr\"oger \cite{Kr} and improved by Chen-Wang \cite{CW} and Bakry-Qian \cite{BQ}. 
By using a Bochner-Weizenb\"ock formula established  recently by Ohta-Sturm \cite{OS2}, 
we find that the one dimensional model coincides with that in the Riemannian case, 
as presented in Theorem \ref{main thm}. 
It should be not so surprising,
 because when we consider $F$ in $\mathbb{R}$, 
it can only be two pieces of linear functions. 
Since the 1-D model has been extensively studied in \cite{BQ}, it also eases our situation, 
although we deal with a nonlinear operator. One  difficulty  arises when we  deal with the Neumann boundary problem, 
since the convexity of boundary could  not be directly applied
due to the difference between the metric induced from the boundary itself and the metric 
induced from the gradient of the first eigenfunction. We will establish
 some equivalence between them (see Lemma \ref{lem1} and \ref{lem2}) to overcome this difficulty.
Another ingredient is a comparison theorem on the maxima of eigenfunction with 
that of  the 1-D model function (Theorem \ref{Comparison thm2}). 
Everything in \cite{BQ} works except the boundedness of the Hessian of eigenfunctions around a 
critical point (since the eigenfunction is only $C^{1,\alpha}$ among $M$), which was used to prove (\ref{eq12}). 
Here we  avoid the use of the Hessian of eigenfunctions by using the comparison theorem on the gradient.
For the rest we  follow step by step the work of Bakry-Qian \cite{BQ} to get Theorem \ref{main thm}.

\

This paper is organized as follows. In Section 2,
the fundamentals in Finsler geometry is briefly introduced
 and the recent work of Ohta-Sturm is reviewed. 
We shall first prove the comparison theorem on the gradient and 
on the maxima of the eigenfunction and then Theorem \ref{main thm}  in Section 3.

\

\section{Preliminaries on Finsler geometry}

In this section  we briefly recall the fundamentals of Finsler geometry,
 as well as  the recent developments on the analysis of Finsler geometry by Ohta-Sturm \cite{Oh, OS1, OS2}.
For Finsler geometry, we refer to \cite{BCS} and \cite{Sh}.

\subsection{Finsler structure and Chern connection}

Let $M^n$ be a smooth, connected $n$-dimensional  manifold. A function $F:TM\to[0,\infty)$ is called a \textit{Finsler structure} if it satisfies the following properties:
\begin{itemize}
\item[(i)] $F$ is $C^\infty$ on $TM\setminus\{0\}$;
\item [(ii)] $F(x, tV) = t F(x, V)$ for all $(x, y)\in TM$ and all $t > 0$;
\item [(iii)] for every $(x, V)\in TM\setminus\{0\}$, the matrix
$$g_{ij}(V):=\frac{\partial^2}{\partial V_i\partial V_j}(\frac12 F^2)(x, V)$$ is positive definite.
\end{itemize}
Such a pair $(M^n,F)$ is called a \textit{Finsler manifold}. 
A Finsler structure is said to be \textit{reversible} if, in addition, $F$ is even. Otherwise $F$
is non-reversible.
By a {\it Finsler measure space}  we mean  a triple $(M^n,F,m)$ constituted with a smooth, connected $n$-dimensional manifold $M$, a Finsler structure $F$ on $M$ and a measure $m$ on $M$.

For $x_1,x_2\in M$, the \textit{distance function} from $x_1$ to $x_2$ is defined by
$$d(x_1,x_2):=\inf_\gamma \int_0^1 F(\dot{\gamma}(t))dt,$$
where the infimum is taken over all $C^1$-curves $\gamma:[0,1]\to M$ such that $\gamma(0)=x_1$ and $\gamma(1)=x_2$. Note that the distance function may not be symmetric unless $F$ is reversible.
A $C^\infty$-curve $\gamma:[0,1]\to M$ is called a \textit{geodesic} if $F(\dot{\gamma})$ is constant and it is locally minimizing.
The \textit{diameter} of $M$ is defined by $$d:=\sup_{x,y\in M} d(x,y).$$
The \textit{forward} and \textit{backward open balls} are defined by
$$B^+(x,r):=\{y\in M: d(x,y)<r\},\quad B^-(x,r):=\{y\in M: d(y,x)<r\}.$$ We denote
$B^\pm (x,r):=B^+(x,r)\bigcup B^-(x,r).$

For every non-vanishing vector field $V$, $g_{ij}(V)$ induces a Riemannian structure $g_V$ of $T_xM$  via
$$g_V(X,Y)=\sum_{i,j=1}^n g_{ij}(V)X^iY^j, \hbox{ for }X,Y\in T_xM.$$
In particular, $g_V(V,V)=F^2(V)$.

Let $\pi: TM\setminus\{0\}\to M$ the projection map. The pull-back bundle $\pi^*TM$ admits a unique linear connection, which is the \textit{Chern connection}. The Chern connection is determined by the following structure equations, which characterize ``torsion freeness":
\begin{eqnarray}\label{torsion free}
D_X^V Y-D_Y^V X=[X,Y]\end{eqnarray}
 and ``almost $g$-compatibility"
\begin{eqnarray}\label{compatible}
Z(g_V(X,Y))=g_V(D_Z^V X,Y)+g_V(X,D_Z^V Y)+C_V(D_Z^V V,X,Y)\end{eqnarray}
for $V\in TM\setminus\{0\}, X,Y,Z\in TM$. Here $$C_V(X,Y,Z):=C_{ijk}(V)X^i Y^j Z^k=\frac{1}{4}\frac{\partial^3 F^2}{\partial V^i V^j V^k}(V)X^i Y^j Z^k$$ denotes the \textit{Cartan tensor} and $D_X^V Y$ the \textit{covariant derivative} with respect to reference vector $V\in TM\setminus \{0\}$.
We mention here that $C_V(V,X,Y)=0$ due to the homogeneity of $F$.
In terms of the Chern connection, a geodesic $\gamma$ satisfies $D_{\dot{\gamma}}^{\dot{\gamma}} {\dot{\gamma}}=0$.
For local computations in Finsler geometry, we refer to \cite{Sh}.

\subsection{Hessian and Finsler-Laplacian}

We shall introduce the Finsler-Laplacian on Finsler manifolds. First of all, we recall the notion of the Legendre transform.

Given a Finsler structure $F$ on $M$, there is a natural dual norm $F^*$ on the cotangent space $T^*M$, which is defined by
$$F^*(x,\xi):=\sup_{F(x, V)\leq 1} \xi(V) \hbox{ for any }\xi\in T_x^*M.$$
One can show that  $F^*$ is also a Minkowski norm on $T^*M$ and
$$g_{ij}^*(\xi):=\frac{\partial^2}{\partial \xi_i\partial \xi_j}(\frac12 F^{*2})(x, \xi)$$ is positive definite for every $(x, \xi)\in T^*M\setminus\{0\}$.

The \textit{Legendre transform} is defined by the map $l:T_xM\to T_x^*M:$
\begin{equation*}l(V):=\left\{
\begin{array}{lll} g_V(V,\cdot)&\hbox{ for }V\in T_xM\setminus\{0\},\\
0 &\hbox{ for }V=0.\\
\end{array}
\right.
\end{equation*}
One can verify that $F(V)=F^*(l(V))$ for any $V\in TM$ and $g_{ij}^*(x,l(V))$ is the inverse matrix of $g_{ij}(x,V)$.

Let $u:M\to \mathbb{R}$ be a smooth function on $M$ and $Du$ be its differential $1$-form. The \textit{gradient} of $u$ is defined as $\nabla u(x):=l^{-1}(Du(x))\in T_xM$. Denote $M_u:=\{Du\neq 0\}$. Locally we can write in coordinates
\begin{equation*}
\nabla u=\sum_{i,j=1}^n g^{ij}(x,\nabla u)\frac{\partial u}{\partial x_i}\frac{\partial}{\partial x_j} \hbox{ in } M_u.
\end{equation*}

The \textit{Hessian} of $u$ is defined by using Chern connection as
\begin{eqnarray}\label{hess sym}
\nabla^2 u(X,Y)=g_{\nabla u}(D_X^{\nabla u} \nabla u, Y),
\end{eqnarray}
One can  show that $\nabla^2 u(X,Y)$ is symmetric, see \cite{WuX} and \cite{OS2}. Indeed, using (\ref{torsion free}) and (\ref{compatible}) and noticing that $C_{\nabla u}(\nabla u, X, Y)=0$, we have
\begin{eqnarray*}
g_{\nabla u}(D_X^{\nabla u} \nabla u, Y)
&=& X(g_{\nabla u}(\nabla u, Y))-g_{\nabla u}(\nabla u, D_X^{\nabla u} Y)\\
&=& XY(u)-g_{\nabla u}(\nabla u, D_Y^{\nabla u} X+[X,Y])\\
&=& YX(u)+[X,Y](u)-g_{\nabla u}(\nabla u, D_Y^{\nabla u} X)-[X,Y](u)\\
&=& Y(g_{\nabla u}(\nabla u, X))-g_{\nabla u}(\nabla u, D_Y^{\nabla u} X)
=g_{\nabla u}(D_Y^{\nabla u} \nabla u, X).
\end{eqnarray*}

In order to define a Laplacian on Finsler manifolds, we need a measure $m$ (or a volume form $dm$) on $M$. From now on, we consider the Finsler measure space $(M,F,m)$ equipped with a fixed smooth measure $m$.
Let $V\in TM$ be a smooth vector field on $M$. The \textit{divergence} of $V$ with respect to $m$ is defined by
\begin{eqnarray*}
\div_m V dm=d(V\lrcorner dm),
\end{eqnarray*}
where $V\lrcorner dm$ denotes the inner product of $V$ with the volume form $dm$.
In a local coordinate $(x^i)$, expressing $dm=e^\Phi dx^1 dx^2\cdots dx^n$, we can write $\div_m V$ as
\begin{eqnarray*}
\div_m V=\sum_{i=1}^n\left(\frac{\partial V^i}{\partial x^i}+V^i\frac{\partial \Phi}{\partial x^i}\right).
\end{eqnarray*}

A Laplacian, which is called the \textit{Finsler-Laplacian},  can now be defined by
\begin{eqnarray*}
\Delta_m u:=\div_m(\nabla u).
\end{eqnarray*}
We remark that  the Finsler-Laplacian is better to  be viewed  in a weak sense that for $u\in W^{1,2}(M)$,
\begin{eqnarray*}
\int_M \phi\Delta_m u dm=-\int_M D\phi(\nabla u) dm \hbox{ for }\phi\in C_c^\infty(M).
\end{eqnarray*}

The relationship between $\Delta_m u$ and $\nabla^2 u$ is that
\begin{eqnarray*}
\Delta_m u +D\Psi(\nabla u)= \tr_{g_{\nabla u}}(\nabla^2 u)=\sum_{i=1}^n \nabla^2 u(e_i,e_i),
\end{eqnarray*}
where $\Psi$ is defined by $dm=e^{-\Psi(V)}d\hbox{Vol}_{g_V}$ and $\{e_i\}$ is an orthonormal basis of $T_xM$ with respect to $g_{\nabla u}$. See e.g. \cite{WuX}, Lemma 3.3.

Given a vector field $V$, the \textit{weighted Laplacian} is defined on the weighted Riemannian manifold $(M,g_V,m)$ by
 \begin{eqnarray*}
\Delta_m^V u:=\div_m(\nabla^V u),
\end{eqnarray*}
where
\begin{equation*}\nabla^V u:=\left\{
\begin{array}{lll} \sum_{i,j=1}^n g^{ij}(x,V)\frac{\partial u}{\partial x_i}\frac{\partial}{\partial x_j}&\hbox{ for }V\in T_xM\setminus\{0\},\\
0 &\hbox{ for }V=0.\\
\end{array}
\right.
\end{equation*}

Similarly, the weighted Laplacian can be viewed in a weak sense that for $u\in W^{1,2}(M)$. We note that $\Delta_m^{\nabla u} u=\Delta_m u$.

\subsection{Finsler manifolds with boundary}

Assume that $(M,F,m)$ is a Finsler measure space with boundary $\partial M$, then we shall view $\partial M$ as a hypersurface embedded in $M$. $\partial M$ is also a Finsler manifold with a Finsler structure $F_{\partial M}$ induced by $F$.  For any $x\in\partial M$, there exists exactly two unit \textit{normal vectors} $\nu$, which are characterized by
$$T_x(\partial M)=\{V\in T_xM: g_\nu(\nu,V)=0, g_\nu(\nu,\nu)=1\}.$$ Throughout this paper, we choose the normal vector that points outward $M$. Note that, if $\nu$ is a normal vector, $-\nu$ may be not a normal vector unless $F$ is reversible.

The normal vector $\nu$ induces a volume form $dm_\nu$ on $\partial M$ from $dm$ by
$$V\lrcorner dm= g_\nu(\nu, V)dm_\nu, \quad\hbox{ for all } V\in T(\partial M).$$
One can check that Stokes theorem holds (see \cite{Sh}, Theorem 2.4.2) 
\begin{eqnarray*}
\int_M\div_m(V)dm=\int_{\partial M} g_\nu(\nu, V)dm_\nu.
\end{eqnarray*}

We recall the convexity of the boundary of $M$.

The \textit{normal curvature} $\Lambda_\nu(V)$ at $x\in \partial M$ in a direction $V\in T_x(\partial M)$ is defined by
\begin{eqnarray}
\Lambda_\nu(V)=g_\nu(\nu,D_{\dot{\gamma}}^{\dot{\gamma}} {\dot{\gamma}}(0)),
\end{eqnarray}
where $\gamma$ is the unique local geodesic for the Finsler structure $F_{\partial M}$ on $\partial M$ induced by $F$ with  the initial data $\gamma(0)=x$ and $\dot{\gamma}(0)=V$.

$M$ is said to has  \textit{convex boundary}  if for any $x\in\partial M$,%there is an open neighborhood $U$ of $x$ such that all geodesics in $U$, issuing from $x$ and tangent to $\partial M$, lie on one side of $\partial M$.
%It turns out that it is equivalent to say that  
the normal curvature $\Lambda_\nu$ at $x$ is non-positive in any directions $V\in T_x(\partial M)$. We remark that the convexity of $M$ means that $D_{\dot{\gamma}}^{\dot{\gamma}} {\dot{\gamma}}(0)$ lies at the same side of $T_x M$ as $M$. Hence the choice of normal is not essential for the definition of convexity. (See Lemma 3.2 below).
There are several equivalent definitions of convexity, see for example \cite{BCGS} and \cite{Sh}.
%, this kind of convexity is called \textit{infinitesimal convexity}. Moreover, inspired by Shen (see Chapter 14 in \cite{Sh}), they proved that it is equivalent to 
%various definition%say $\partial M$ is \textit{locally convex} or  \textit{geometricly convex}. We will use infinitesimal convexity in this paper. For details of convexity, we refer to \cite{BCGS, Sh}.

\subsection{Weighted Ricci curvature}

The Ricci curvature of Finsler manifolds is defined as the trace of the flag curvature.
Explicitly, given two linearly independent vectors $V,W\in T_xM\setminus\{0\}$, the \textit{flag curvature} is defined by
$$\mathcal{K}^V(V,W)=\frac{g_V(R^V(V,W)W,V)}{g_V(V,V)g_V(W,W)-g_V(V,W)^2},$$
where $R^V$ is the \textit{Chern curvature} (or \textit{Riemannian curvature}):
$$R_V(X,Y)Z=D_X^V D_Y^V Z-D_Y^V D_X^V Z-D_{[X,Y]}^V Z.$$
Then the \textit{Ricci curvature} is defined by
$$Ric(V):=\sum_{i=1}^{n-1} \mathcal{K}(V,e_i),$$ where $e_1,\cdots,e_{n-1},\frac{V}{F(V)}$ form an orthonormal basis of $T_xM$ with respect to $g_V$.

We recall the definition of  the weighted Ricci curvature on Finsler manifolds, which was introduced by Ohta in \cite{Oh},
motivated by the work of Lott-Villani \cite{LV} and Sturm \cite{S1} on metric measure space.

\begin{definition}[\cite{Oh}]
Given a unit vector $V\in T_x M$, let $\eta: [-\varepsilon,\varepsilon]\to M$ be the
geodesic such that $\dot{\eta}(0)=V$. Decompose $m$ as $m=e^{-\Psi} d \hbox{vol}_{\dot{\eta}}$ along $\eta$, where $\hbox{vol}_{\dot{\eta}}$ is the volume form of $g_{\dot{\eta}}$ as a Riemannian metric.
Then
%\begin{itemize}
%\item[(i)]
\begin{equation*}
\begin{array}{rcl} \ds\vs 
Ric_n(V)&:=&\left\{
\begin{array}{lll} \ds\vs Ric(V)+(\Psi\circ\eta)''(0) &\hbox{ if }(\Psi\circ \eta)'(0)=0,\\
-\infty &\hbox{ otherwise };
\end{array}\right. \\
%\end{equation*}
\ds\vs Ric_N(V)&:=&Ric(V)+(\Psi\circ\eta)''(0)-\frac{(\Psi\circ\eta)'(0)^2}{N-n},\hbox{ for }N\in(n,\infty),
\\
\ds Ric_\infty(V)&:=& Ric(V)+(\Psi\circ\eta)''(0).
\end{array}
\end{equation*}
For $c\geq 0$ and $N\in [n,\infty]$,  define $$Ric_N(cV):=c^2Ric_N(V).$$
\end{definition}

Ohta proved in \cite{Oh} that, for $K\in\mathbb{R}$, the bound $Ric_N(V)\geq KF^2(V)$ is equivalent to Lott-Villani and Sturm's \textit{weak curvature-dimension condition} $CD(K,N)$.

%\begin{Remark} In the local Minkowski space $(\mathbb{R}^n,F)$, $Ric_\infty(v)\equiv 0$.
%\end{Remark}

\subsection{Bochner-Weitzenb\"ock formula}
The  following  Bochner-Weizenb\"ock type formula,  established by Ohta-Sturm in \cite{OS2},
plays an important role in this paper.
\begin{theorem}[\cite{OS2}, Theorem 3.6]\label{BW formula}
Given $u\in W_{loc}^{2,2}(M)\bigcap C^1(M)$ with $\Delta_m u\in W_{loc}^{1,2}(M)$, we have
\begin{eqnarray*} -\int_M D\eta\left(\nabla^{\nabla u}\left(\frac{F^2(x,\nabla u)}{2} \right)\right) dm=
\int_M \eta\bigg\{D(\Delta_m u)(\nabla u)+
 Ric_\infty(\nabla u)+\|\nabla^2 u\|^2_{HS(\nabla u)}\bigg\} dm\end{eqnarray*}
 as well as
\begin{eqnarray*} -\int_M D\eta\left(\nabla^{\nabla u}\left(\frac{F^2(x,\nabla u)}{2} \right)\right) dm\geq \int_M \eta\bigg\{D(\Delta_m u)(\nabla u)+  Ric_N(\nabla u)+\frac{(\Delta_m u)^2}{N}\bigg\} dm\end{eqnarray*}
for any $N\in [n,\infty]$ and all nonnegative functions $\eta\in W_c^{1,2}(M)\bigcap L^\infty(M)$. Here $\|\nabla^2 u\|^2_{HS(\nabla u)}$ denotes the Hilbert-Schmidt norm with respect to $g_{\nabla u}$.
\end{theorem}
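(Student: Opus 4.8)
The plan is to prove the Bochner--Weitzenböck identity and inequality by reduction to the weighted Riemannian setting, where the classical Bochner formula is available. The key observation is that on the open set $M_u = \{Du \neq 0\}$, where $\nabla u$ is a genuine smooth (or at least $C^1$) nonvanishing vector field, one can freeze the reference vector: the Riemannian metric $g_{\nabla u}$ turns $M_u$ into a (weighted) Riemannian manifold, and the Finsler-Laplacian $\Delta_m u$ coincides with the weighted Riemannian Laplacian $\Delta_m^{\nabla u} u$ there. Thus I would first recall that on $M_u$ we have the pointwise identity $\tr_{g_{\nabla u}}(\nabla^2 u) = \Delta_m u + D\Psi(\nabla u)$ and then apply the classical Bochner formula for the drift Laplacian on $(M_u, g_{\nabla u}, m)$ to the function $u$ itself. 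The subtlety, which is exactly the content of the Ohta--Sturm argument, is that the reference vector $\nabla u$ also varies, so the naive ``frozen'' Bochner formula picks up extra curvature-type terms; these are precisely absorbed into the weighted Ricci curvature $Ric_\infty(\nabla u)$, using that the Cartan tensor contracts to zero on $\nabla u$ (as noted after (\ref{compatible})) and that the Chern connection is torsion-free.

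The second step is to handle the zero set $\{Du = 0\}$ and the integrability. Since $u \in W^{2,2}_{loc} \cap C^1$ with $\Delta_m u \in W^{1,2}_{loc}$, all the integrands $D(\Delta_m u)(\nabla u)$, $Ric_\infty(\nabla u)$, and $\|\nabla^2 u\|^2_{HS(\nabla u)}$ are in $L^1_{loc}$; on the interior of $\{Du=0\}$ the eigenfunction is locally constant, $\nabla^2 u = 0$ a.e. there by the Sobolev structure, and both sides vanish, while the topological boundary of $\{Du=0\}$ has measure zero in the relevant sense. So I would derive the identity first on $M_u$ with a test function $\eta$ compactly supported in $M_u$, then pass to general nonnegative $\eta \in W^{1,2}_c \cap L^\infty$ by an approximation/cutoff argument (truncating $\eta$ away from the critical set and using dominated convergence). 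The left-hand side is genuinely the weighted Dirichlet-energy pairing $-\int D\eta(\nabla^{\nabla u}(F^2(\nabla u)/2))\,dm$, which is well-defined globally since $F^2(\nabla u)/2 = g^*(Du,Du)/2$ is $C^1$.

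The third step is the passage from the $\infty$-version identity to the $N$-version inequality for $N \in [n,\infty]$. This is a purely algebraic trace inequality: by Cauchy--Schwarz (or the standard refinement of it for traces),
\begin{equation*}
\|\nabla^2 u\|^2_{HS(\nabla u)} \geq \frac{(\tr_{g_{\nabla u}} \nabla^2 u)^2}{n} = \frac{(\Delta_m u + D\Psi(\nabla u))^2}{n},
\end{equation*}
and then the elementary inequality
\begin{equation*}
\frac{(\Delta_m u + D\Psi(\nabla u))^2}{n} + (\text{second-variation term of }\Psi) \geq \frac{(\Delta_m u)^2}{N} + Ric_N(\nabla u) - Ric_\infty(\nabla u)
\end{equation*}
interpolates exactly as in Ohta's definition of $Ric_N$: the excess weight term $\frac{(\Psi\circ\eta)'(0)^2}{N-n}$ is matched against the gap $\frac{1}{n} - \frac{1}{N}$ by the quadratic inequality $\frac{(a+b)^2}{n} + \frac{b^2}{N-n} \geq \frac{a^2}{N}$ with $a = \Delta_m u$, $b = D\Psi(\nabla u)$ (valid since the right choice of the convex combination minimizes the left side). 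Combining this with the $\infty$-identity and integrating against $\eta \geq 0$ yields the stated inequality.

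The main obstacle I anticipate is Step 1: correctly tracking the extra terms that appear when differentiating through the varying reference vector $\nabla u$, i.e.\ genuinely deriving (not just quoting) the appearance of $Ric_\infty(\nabla u)$ rather than the ``frozen'' Ricci curvature of $g_{\nabla u}$. This requires a careful local computation with the Chern connection — using the structure equations (\ref{torsion free}) and (\ref{compatible}), the identity $C_{\nabla u}(\nabla u,\cdot,\cdot)=0$, and the definition of the Chern curvature $R^V$ — together with the identification of the $\Psi$-derivatives along the geodesic generated by $\nabla u$ with the weighted terms in the definition of $Ric_\infty$. Since this is precisely Theorem 3.6 of \cite{OS2}, in the present paper I would simply cite it; were I to prove it from scratch, this local curvature bookkeeping would be the technical heart of the matter, and everything else (the zero set, the approximation, the trace inequality) is routine.
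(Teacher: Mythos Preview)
The paper does not prove this theorem at all: it is stated with the attribution ``[\cite{OS2}, Theorem 3.6]'' and simply quoted as a known result of Ohta--Sturm, with no argument given. Your proposal therefore goes well beyond what the paper does; you sketch (accurately, in outline) the Ohta--Sturm strategy of freezing the reference vector on $M_u$, applying the weighted Riemannian Bochner formula for $(M_u,g_{\nabla u},m)$, tracking the extra terms coming from the variation of $\nabla u$ via the Chern connection and the vanishing $C_{\nabla u}(\nabla u,\cdot,\cdot)=0$, handling the critical set by approximation, and finally passing from the $Ric_\infty$ identity to the $Ric_N$ inequality by the trace Cauchy--Schwarz plus the quadratic interpolation $\frac{(a+b)^2}{n}+\frac{b^2}{N-n}\geq \frac{a^2}{N}$. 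You also correctly identify that the honest technical content lies in Step~1 and that, in the context of this paper, the right move is to cite \cite{OS2}; that is exactly what the authors do. So there is no discrepancy to flag --- your final recommendation matches the paper's treatment, and the extra material you wrote is a faithful summary of the proof in \cite{OS2} rather than an alternative route.
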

%$$\|\nabla^2 u\|^2_{HS(\nabla u)}=a_{ij}(x,\nabla u)a_{kl}(x,\nabla u)\left(\frac{\partial^2 u}{\partial x_i\partial x_k}-\Gamma_{ik}^p\frac{\partial u}{\partial x_p}\right)\left(\frac{\partial^2 u}{\partial x_j\partial x_l}-\Gamma_{jl}^q\frac{\partial u}{\partial x_q}\right).$$
%\
Based on Bochner-Weitzenb\"ock formula, a similar argument as Bakry-Qian \cite{BQ} Theorem 6, leads to a refined inequality, which was referred to as
an extended curvature-dimension inequality there. Another direct proof was also given in \cite{WX}, Lemma 2.3.
\begin{theorem}Assume that $Ric_N\geq K$ for some $N\in [n,\infty]$ and some $K\in\mathbb{R}$. Given $u \in W_{loc}^{2,2}(M)\bigcap C^1(M)$ with $\Delta_m u\in W_{loc}^{1,2}(M)$, we have
\begin{eqnarray}\label{refined bochner} -\int_M D\eta\left(\nabla^{\nabla u}\left(\frac{F^2(x,\nabla u)}{2} \right)\right) dm&\geq &\int_M \eta\bigg\{D(\Delta_m u)(\nabla u) +K F(\nabla u)^2+\frac{(\Delta_m u)^2}{N}\nonumber\\&&+\frac{N}{N-1}\left(\frac{\Delta_m u}{N}-\frac{D(F^2(x,\nabla u))(\nabla u)}{2F^2(x,\nabla u)}\right)^2\bigg\} dm\end{eqnarray}
for any $N\in [n,\infty]$ and all nonnegative functions $\eta\in W_c^{1,2}(M)\bigcap L^\infty(M)$.
\end{theorem}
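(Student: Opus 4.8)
The plan is to derive the refined inequality \eqref{refined bochner} directly from the second (Bakry--Émery type) inequality in Theorem \ref{BW formula}, by ``completing the square'' in the Hessian term, exactly as in Bakry--Qian \cite{BQ}, Theorem 6. The starting point is the pointwise algebraic inequality controlling the Hilbert--Schmidt norm of $\nabla^2 u$ from below in terms of its trace and of the ``radial'' second derivative. Working on the open set $M_u=\{Du\neq 0\}$, where $g_{\nabla u}$ is a genuine Riemannian metric, fix $x\in M_u$ and choose a $g_{\nabla u}$-orthonormal basis $\{e_i\}_{i=1}^n$ of $T_xM$ with $e_n=\nabla u/F(\nabla u)$. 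Since $\nabla^2 u$ is symmetric (as shown in the excerpt), decompose the trace as $\Delta_m u + D\Psi(\nabla u)=\tr_{g_{\nabla u}}\nabla^2 u=\nabla^2 u(e_n,e_n)+\sum_{i=1}^{n-1}\nabla^2 u(e_i,e_i)$. One checks that $\nabla^2 u(e_n,e_n)=\frac{1}{2F^2(\nabla u)}D(F^2(x,\nabla u))(\nabla u)$, i.e. the radial component of the Hessian is precisely the quantity appearing inside the last square in \eqref{refined bochner}; this identity follows from $\nabla^2 u(\nabla u,\nabla u)=g_{\nabla u}(D^{\nabla u}_{\nabla u}\nabla u,\nabla u)=\tfrac12 D(g_{\nabla u}(\nabla u,\nabla u))(\nabla u)$ together with $g_{\nabla u}(\nabla u,\nabla u)=F^2(x,\nabla u)$.

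Next I would apply Cauchy--Schwarz to the ``spherical'' block: writing $a:=\nabla^2 u(e_n,e_n)$ and $b:=\sum_{i=1}^{n-1}\nabla^2 u(e_i,e_i)$, one has
\begin{eqnarray*}
\|\nabla^2 u\|^2_{HS(\nabla u)}\ \geq\ a^2+\frac{b^2}{n-1}\ \geq\ a^2+\frac{b^2}{N-1},
\end{eqnarray*}
the last step using $N\geq n$. Substituting $b=(\Delta_m u+D\Psi(\nabla u))-a$ and the identity for $a$, and then combining with $Ric_\infty(\nabla u)=Ric_N(\nabla u)+\frac{(\Psi\circ\eta)'(0)^2}{N-n}$ (so that the Weitzenböck right-hand side regroups into $Ric_N(\nabla u)+$ a sum of squares in the variables $a$, $b$, $D\Psi(\nabla u)$), one optimizes the resulting quadratic form. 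The algebra is the standard two-variable minimization in \cite{BQ}: after using $Ric_N\geq K$, i.e. $Ric_N(\nabla u)\geq KF^2(x,\nabla u)$, the minimum over the free combination produces exactly the term $\frac{N}{N-1}\bigl(\frac{\Delta_m u}{N}-\frac{D(F^2(x,\nabla u))(\nabla u)}{2F^2(x,\nabla u)}\bigr)^2$ together with $\frac{(\Delta_m u)^2}{N}$. Since all of this is pointwise on $M_u$, integrating against $\eta\geq 0$ over $M_u$ and using that the integrand of the left-hand side and the $Ric_N$, $(\Delta_m u)^2$ terms on the right-hand side all vanish a.e. on $M\setminus M_u$ (because $\nabla u=0$ and $\nabla^{\nabla u}(\cdot)=0$ there, and $Ric_N(0)=0$), upgrades the inequality to all of $M$.

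The main obstacle is regularity and the passage between the pointwise and integral statements. On $M\setminus \overline{M_u}$ everything is zero, but the behaviour near $\partial M_u$, where $\nabla u$ degenerates, has to be handled so that the Weitzenböck inequality of Theorem \ref{BW formula} (which is already stated in the integrated, weak form for $u\in W^{2,2}_{loc}\cap C^1$ with $\Delta_m u\in W^{1,2}_{loc}$) can legitimately be combined with the pointwise algebra valid only on $M_u$. The clean way is to run the pointwise completion-of-squares entirely inside $M_u$ with test function $\eta$ supported in $M_u$, obtaining \eqref{refined bochner} for all nonnegative $\eta\in W^{1,2}_c(M)\cap L^\infty$ supported in $M_u$, and then argue by approximation/density that the inequality extends to general $\eta$; here one uses that $D\eta(\nabla^{\nabla u}(F^2(x,\nabla u)/2))$, $F(\nabla u)^2$, $(\Delta_m u)^2$ and the last square are all integrable and vanish a.e. outside $M_u$, so no boundary contribution from $\partial M_u$ appears. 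This is precisely the point where one invokes ``a similar argument as Bakry--Qian \cite{BQ}, Theorem 6'' (equivalently \cite{WX}, Lemma 2.3) and borrows their handling of the degeneracy set; the rest is routine linear algebra.
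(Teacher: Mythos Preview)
Your approach is the one the paper itself points to (the paper gives no detailed argument, only the references to Bakry--Qian \cite{BQ}, Theorem~6, and \cite{WX}, Lemma~2.3), so substantively you are on the same track. Two small corrections, however. First, you must start from the \emph{first} identity in Theorem~\ref{BW formula} (the one containing $Ric_\infty$ and $\|\nabla^2 u\|^2_{HS(\nabla u)}$), not from the ``second (Bakry--\'Emery type) inequality'': the latter has no Hessian term left in which to complete the square. Second, the step $a^2+\frac{b^2}{n-1}\ge a^2+\frac{b^2}{N-1}$ should \emph{not} be performed before you bring in $Ric_\infty(\nabla u)=Ric_N(\nabla u)+\frac{(D\Psi(\nabla u))^2}{N-n}$; if you drop to $N-1$ first, the resulting inequality $\frac{p^2}{N-n}+\frac{(L+p-a)^2}{N-1}\ge\frac{(L-a)^2}{N-1}$ (with $p=D\Psi(\nabla u)$, $L=\Delta_m u$) is in general false. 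Keep the denominator $n-1$, and then use the elementary weighted Cauchy--Schwarz inequality
\[
\frac{(c+p)^2}{n-1}+\frac{p^2}{N-n}\ \ge\ \frac{c^2}{(n-1)+(N-n)}=\frac{c^2}{N-1},\qquad c:=L-a,
\]
which is precisely the ``two-variable minimization'' in \cite{BQ}. With these fixes your outline goes through, and your treatment of the degeneracy set $M\setminus M_u$ is adequate.
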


\section{Proof of Theorem \ref{main thm}}

We first remark that a weak eigenfunction $u\in W^{1,2}(M)$ of Finsler-Laplacian defined in (\ref{first eigenvalue}) has regularity that $u\in C^{1,\alpha}(M)\bigcap W^{2,2}(M)\bigcap C^\infty(M_u)$ (see \cite{GS}).

Let us  recall the 1-D models $L_{K,N}$ described in \cite{BQ}.
Let $K\in\mathbb{R}$ and $N\in (1,\infty]$.
\begin{itemize}
\item[(i)]For $K>0$ and $1<N<\infty$,
$L_{K,N}$ is defined on $\left(-\frac{\pi}{2\sqrt{K/(N-1)}},\frac{\pi}{2\sqrt{K/(N-1)}}\right)$ by
\begin{eqnarray*}
L_{K,N}(v)(t)=v''-\sqrt{K(N-1)}\tan(\sqrt{\frac{K}{N-1}} t)v';
\end{eqnarray*}
\item[(ii)]For $K<0$ and $1<N<\infty$,
$L_{K,N}$ is defined on $(-\infty,0)\bigcup(0,\infty)$ by
\begin{eqnarray*}
L_{K,N}(v)(t)=v''-\sqrt{-K(N-1)}\coth(\sqrt{-\frac{K}{N-1}} t)v'
\end{eqnarray*}
and on $(-\infty,\infty)$ by
\begin{eqnarray*}
L_{K,N}(v)(t)=v''-\sqrt{-K(N-1)}\tanh(\sqrt{-\frac{K}{N-1}} t)v';
\end{eqnarray*}
\item[(iii)]For $K=0$ and $1<N<\infty$,
$L_{K,N}$ is defined on $(-\infty,0)\bigcup(0,\infty)$ by
\begin{eqnarray*}
L_{K,N}(v)(t)=v''+\frac{N-1}{t}v'
\end{eqnarray*}
and on $(-\infty,\infty)$ by
\begin{eqnarray*}
L_{K,N}(v)(t)=v'';
\end{eqnarray*}
\item[(iv)]For $K\neq 0$ and $N=\infty$,
$L_{K,N}$ is defined on $(-\infty,\infty)$ by
\begin{eqnarray*}
L_{K,N}(v)(t)=v''-Ktv';
\end{eqnarray*}
\item[(v)]For $K= 0$ and $N=\infty$,
$L_{K,N}$ is defined on $(-\infty,\infty)$ by
\begin{eqnarray*}
L_{K,N}(v)(t)=v''-cv'
\end{eqnarray*}
for any constant $c$.
\end{itemize}
For convenience, we write $L_{K,N}(v)(t)=v''-T(t)v'$. It is easy to check that $T'=K+\frac{T^2}{N-1}.$
Denote by $\mu_{K,N}$ the invariant measure associated with $L_{K,N}$, that is, a measure satisfying
$\int_a^b L_{K,N}(v) d\mu_{K,N}=0$ for $v'(a)=v'(b)=0$. For instance, in the case (i), $d\mu_{K,N}=\cos^{N-1}(\sqrt{\frac{K}{N-1}}t)dt.$

\

The following gradient comparison theorem plays the most crucial role in the proof of our main theorem.

\begin{theorem} \label{Comparison thm}
Let $(M,F,m)$ and $\lambda_1$  be as in Theorem \ref{main thm}
and $u$ be the eigenfunction. Let $v$ be a solution of the 1-D model problem on some interval $(a,b)$:
\begin{equation}\label{1-D model}
L_{K,N}(v)=-\lambda_1 v,\quad v'(a)=v'(b)=0,\quad v'>0.
\end{equation}
Assume that $[\min u,\max u]\subset [\min v,\max v]$,
then
\begin{equation}\label{grad comparison}
F(x,\nabla u(x))\leq v'( v^{-1}(u(x))).
\end{equation}
\end{theorem}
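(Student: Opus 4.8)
The plan is to adapt the Kröger–Chen–Wang–Bakry–Qian gradient comparison technique to the Finsler setting, using the refined Bochner–Weitzenböck inequality (\ref{refined bochner}) as the analytic engine. First I would set up the comparison function: let $z(x) := v^{-1}(u(x))$, which is well-defined by the hypothesis $[\min u,\max u]\subset[\min v,\max v]$, and consider the function
\begin{equation*}
P(x) := F^2(x,\nabla u(x)) - \big(v'(z(x))\big)^2 = F^2(x,\nabla u(x)) - \psi(u(x)),
\end{equation*}
where $\psi := (v'\circ v^{-1})^2$ is a fixed function of one real variable. The goal is to show $P\leq 0$ on $M$. Since $u\in C^{1,\alpha}(M)\cap W^{2,2}(M)\cap C^\infty(M_u)$, the function $P$ is smooth on the open set $M_u = \{\nabla u\neq 0\}$ and continuous on $M$; at points where $\nabla u=0$ one has $P = -\psi(u)\leq 0$ trivially unless $v'$ vanishes there, which only happens at the endpoints. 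So it suffices to rule out an interior positive maximum of $P$ on $M_u$.

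Next I would run the maximum-principle argument. Suppose $P$ attains a positive maximum at $x_0\in M_u$. On $M_u$ the Finsler-Laplacian is the weighted Riemannian Laplacian $\Delta_m^{\nabla u}$ for the metric $g_{\nabla u}$, so I can work with the elliptic operator $\Delta_m u = \Delta_m^{\nabla u} u$ and use the refined curvature-dimension inequality (\ref{refined bochner}) in its pointwise form on $M_u$, which reads
\begin{equation*}
\tfrac12 \Delta_m^{\nabla u}\!\big(F^2(\nabla u)\big) - D(\Delta_m u)(\nabla u) \ge K F^2(\nabla u) + \frac{(\Delta_m u)^2}{N} + \frac{N}{N-1}\Big(\frac{\Delta_m u}{N} - \frac{D(F^2(\nabla u))(\nabla u)}{2F^2(\nabla u)}\Big)^{\!2}.
\end{equation*}
At the maximum point, the first derivatives of $P$ vanish (giving $D(F^2(\nabla u)) = \psi'(u)\,Du$, so the last square can be rewritten purely in terms of $\lambda_1 u$, $F$, and $\psi,\psi'$ using $\Delta_m u = -\lambda_1 u$) and $\Delta_m^{\nabla u} P \le 0$. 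Plugging $\Delta_m u = -\lambda_1 u$ and $D(\Delta_m u)(\nabla u) = -\lambda_1 F^2(\nabla u)$, and using the ODE $v''-Tv' = -\lambda_1 v$ together with the identity $T' = K + T^2/(N-1)$ to compute $\Delta_m^{\nabla u}\psi(u)$ and $\Delta_m^{\nabla u} u$, the inequality $\Delta_m^{\nabla u}P\le 0$ collapses — after the routine algebra that mirrors \cite{BQ} exactly, because the one-variable quantities are identical to the Riemannian case — to a contradiction with $P(x_0)>0$. The key observation (already flagged in the introduction) is that the one-dimensional model is unchanged in the Finsler category, so once the Bochner inequality is in hand the bookkeeping is the same as Bakry–Qian's.

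The main obstacle is regularity: the eigenfunction $u$ is only $C^{1,\alpha}$ across the critical set $\{\nabla u=0\}$, and $\Delta_m u$, $\nabla^2 u$ need not be bounded there, so the naive maximum principle for $P$ on all of $M$ is not available — exactly the point the authors say they handle differently from \cite{BQ}. I would deal with this by restricting the analysis to $M_u$, showing $P\le 0$ there by the interior argument above, and then using continuity of $P$ together with the trivial bound $P=-\psi(u)\le 0$ on $M\setminus M_u$ to conclude; one must check that a maximum of $P$ on $\overline{M_u}$ lying on the boundary $\partial M_u\cap M$ or on $\partial M$ (in the Neumann case, where one invokes the convexity of the boundary via the equivalence Lemmas \ref{lem1} and \ref{lem2}) cannot be a positive value, using the Neumann condition $\nabla u\in T_x(\partial M)$ and the non-positivity of the normal curvature. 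A secondary technical point is justifying the pointwise use of (\ref{refined bochner}) and the standard elliptic regularity for $g_{\nabla u}$ (which is a smooth Riemannian metric on $M_u$), so that $u\in C^\infty(M_u)$ makes all the computations legitimate there.
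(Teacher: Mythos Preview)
Your overall strategy --- compare $F(\nabla u)$ to $v'\circ v^{-1}(u)$ via a maximum principle driven by the refined Bochner inequality (\ref{refined bochner}) --- is the right one, and your treatment of the regularity issue (work on $M_u$, use continuity and $P\le 0$ on the critical set) and of the boundary via Lemmas~\ref{lem1}--\ref{lem2} is fine in outline. But there is a genuine gap in the heart of the argument: the test function you propose,
\[
P \;=\; F^2(\nabla u) - \big(v'\circ v^{-1}(u)\big)^2,
\]
is \emph{not} the one for which the algebra ``collapses'' as in \cite{BQ}. The Bakry--Qian argument that the paper imports uses
\[
P \;=\; \psi(u)\Big(\tfrac12 F^2(\nabla u) - \phi(u)\Big),\qquad \phi(u)=\tfrac12\big(v'\circ v^{-1}(u)\big)^2,
\]
with an additional multiplicative weight $\psi$ chosen via an auxiliary ODE inequality (Corollary~3 of \cite{BQ}). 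At a maximum of $P$ one obtains an inequality of the form $a_1P^2+a_2P+a_3\le 0$, and the whole point is to pick $\psi$ so that $a_1,a_2>0$ while the choice of $\phi$ forces $a_3=0$; this is what yields $P(x_0)\le 0$. If you set $\psi\equiv 1$ as you implicitly do, then $a_1=0$ and the linear coefficient becomes
\[
a_2 \;=\; 2(K-\lambda_1) - 2\phi'' \;=\; 2T\Big(\frac{\lambda_1 v}{v'}-\frac{N}{N-1}T\Big)\Big|_{v^{-1}(u)},
\]
which changes sign in general (e.g.\ whenever $K\neq 0$ or $N<\infty$), so the inequality $a_2 P\le 0$ no longer forces $P\le 0$. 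In short, the step ``the inequality $\Delta_m^{\nabla u}P\le 0$ collapses \ldots\ to a contradiction with $P(x_0)>0$'' fails without the weight; the ``routine algebra that mirrors \cite{BQ} exactly'' is precisely the algebra that requires the weight.

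Two smaller points you should also fix: (i) one first replaces $u$ by $cu$ for $c<1$ so that $[\min u,\max u]\subset(\min v,\max v)$ \emph{strictly}, making $v^{-1}$ smooth on a neighborhood of the range and avoiding the degeneracy $v'\big(v^{-1}(\min u)\big)=0$; (ii) since $u$ is only $C^{1,\alpha}$ globally, the computation of $\Delta_m^{\nabla u}P$ should be phrased distributionally (as the paper does) and then the maximum principle applied to the resulting elliptic inequality, rather than invoking a pointwise second-order condition directly.
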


\begin{proof} First, since $\int_M u=0$, $\min u<0$ while $\max u>0$. 
We may assume that $[\min u,\max u]\subset (\min v,\max v)$ by multiplying $u$ by a constant $0<c<1$. 
If we prove the result for this $u$, then letting $c\to 1$ implies the original statement.

Under the condition $[\min u,\max u]\subset (\min v,\max v)$, $v^{-1}$ is smooth on a neighborhood $U$ of $[\min u,\max u]$.

Consider  $P(x)=\psi(u)(\frac{1}{2}F^2(x,\nabla u)-\phi(u)),$ where $\psi,\phi\in C^\infty (U)$ are two positive smooth functions to be determined later.
We first  consider the case that $P$ attains its maximum at $x_0\in M$,
then study the case that $x_0\in\partial M$ if $\partial M$ is not empty.

\

\noindent{\bf Case 1.} $P$ attains its maximum at $x_0\in M$.

Due to the lack of regularity of $u$, we shall compute in the distributional sense. Let $\eta$ be any nonnegative function  in $W_c^{1,2}(M)\bigcap L^\infty(M)$. We first compute $-\int_M D\eta (\nabla^{\nabla u} P) dm$.
\begin{eqnarray*}
-\int_M D\eta (\nabla^{\nabla u} P) dm&=& -\int_M (\frac{\psi'}{\psi}P-\psi\phi')D\eta(\nabla u)+\psi D\eta(\nabla^{\nabla u}(\frac12 F^2(x,\nabla u))) dm\\
&=&\int_M -D[(\frac{\psi'}{\psi}P-\psi\phi')\eta](\nabla u)+\eta D(\frac{\psi'}{\psi}P-\psi\phi')(\nabla u)\\
&& - D(\psi\eta)(\nabla^{\nabla u}(\frac12 F^2(x,\nabla u))+\eta D\psi(\nabla^{\nabla u}(\frac12 F^2(x,\nabla u)) dm\\
&:=& I+II+III+IV.
\end{eqnarray*}

By using $Du(\nabla u)=F^2(x,\nabla u)=2(\frac{P}{\psi}+\phi)$ and $\Delta_m u=-\lambda_1 u$ in weak sense, we compute 
\begin{eqnarray*}
I=\int_M -\lambda_1 u(\frac{\psi'}{\psi}P-\psi\phi')\eta dm,
\end{eqnarray*}
\begin{eqnarray*}
II&=&\int_M \eta\big[((\frac{\psi''}{\psi}-\frac{\psi'^2}{\psi^2})P-\psi\phi''-\psi'\phi')Du(\nabla u)+\frac{\psi'}{\psi}DP(\nabla u)\big] dm\\
&=&  \int_M \eta\big[2((\frac{\psi''}{\psi}-\frac{\psi'^2}{\psi^2})P-\psi\phi''-\psi'\phi')(\frac{P}{\psi}+\phi)+\frac{\psi'}{\psi}DP(\nabla u)\big] dm,
\end{eqnarray*}
\begin{eqnarray*}
IV &=&\int_M \eta\psi'\big[\frac{1}{\psi}Du(\nabla^{\nabla u}P)+(-\frac{\psi'}{\psi^2}P+\phi')Du(\nabla u)\big] dm\\
&=&\int_M \big\{2\eta\psi'(-\frac{\psi'}{\psi^2}P+\phi')(\frac{P}{\psi}+\phi)+\hbox{ terms of }DP(\nabla u)\big\} dm.
\end{eqnarray*}

For the term III, we apply the refined integral Bochner-Weizenb\"ock formula (\ref{refined bochner}) to derive
\begin{eqnarray*}
III&\geq &\int_M \psi\eta\bigg[D(\Delta_m u)(\nabla u)+KF^2+\frac{(\Delta_m u)^2}{N}\\
&&+\frac{N}{N-1}\left(\frac{\Delta_m u}{N}-\frac{D(F^2(x,\nabla u))(\nabla u)}{2F^2(x,\nabla u)}\right)^2\bigg] dm\\
&=& \int_M \psi\eta\bigg[2(K-\lambda_1)(\frac{P}{\psi}+\phi)+\frac{\lambda_1^2 u^2}{N}\\
&&+\frac{N}{N-1}\left(\frac{-\lambda_1 u}{N}-(-\frac{\psi'}{\psi^2}P+\phi')-\frac{1}{\psi F^2}DP(\nabla u))\right)^2 \bigg] dm\\
&=&\int_M \psi\eta\bigg[2(K-\lambda_1)(\frac{P}{\psi}+\phi)+\frac{\lambda_1^2 u^2}{N-1}+\frac{N}{N-1}(-\frac{\psi'}{\psi^2}P+\phi')^2\\
&&+ \frac{2}{N-1}\lambda_1 u(-\frac{\psi'}{\psi^2}P+\phi') +\hbox{ terms of }DP(\nabla u)\bigg] dm .
\end{eqnarray*}

Combining all we obtain 
\begin{eqnarray}
-\int_M D\eta (\nabla^{\nabla u} P) dm&\geq &\int_M \eta\bigg\{\frac{1}{\psi}\left[2\frac{\psi''}{\psi}-(4-\frac{N}{N-1})\frac{\psi'^2}{\psi^2}\right]P^2\nonumber\\
&&+\left[2\phi\left(\frac{\psi''}{\psi}-2\frac{\psi'^2}{\psi^2}\right)-\frac{N+1}{N-1}\frac{\psi'}{\psi}\lambda u-\frac{2N}{N-1}\frac{\psi'}{\psi}\phi'+2(K-\lambda_1)-2\phi''\right]P\nonumber\\
&&+\psi\left[\frac{1}{N-1}\lambda_1^2u^2+\frac{N+1}{N-1}\lambda_1u \phi'+\frac{N}{N-1}\phi'^2+2(K-\lambda_1)\phi-2\phi\phi''\right]\nonumber\\
&&+\hbox{ terms of }DP(\nabla u)\bigg\} dm\nonumber\\
&:=&-\int_M \big\{a_1 P^2+a_2P+a_3+\hbox{ terms of }DP(\nabla u)\big\} dm.
\end{eqnarray}
Therefore, 
\begin{eqnarray}\label{eq000}
\Delta_m^{\nabla u} P+\hbox{ terms of }DP(\nabla u)=a_1 P^2+a_2P+a_3
\end{eqnarray}
holds in the distributional sense in $M$.

We claim that at the maximum point $x_0$ of $P$, 
\begin{eqnarray}\label{eq111}a_1 P^2+a_2P+a_3\leq 0.\end{eqnarray} In fact, if not, then in a neighborhood $\mathcal{U}$ of $x_0$, $a_1 P^2+a_2P+a_3>0$. It follows from (\ref{eq000}) that
the function $P$ is a strict subsolution to an elliptic operator in $\mathcal{U}$. By maximum principle, $P(x_0)<\max_{\partial \mathcal{U}} P$, which contradicts the maximality of $P(x_0)$.

It is interesting to see that the coefficients $a_i$, $i=1,2,3,$ coincide 
with that appeared in the Riemannian case (see e.g. \cite{BQ}, Lemma 1).
The next  step is to choose suitable positive functions $\psi$ and $\phi$ such that $a_1,a_2>0$ everywhere and $a_3=0$,
which has already been done in \cite{BQ}.
For completeness, we sketch the main idea here.

Choose $\phi(u)=\frac{1}{2}v'(v^{-1}(u))^2$, where $v$ is a solution of 1-D problem (\ref{1-D model}).
One can compute that $$\phi'(u)=v''(v^{-1}(u)),\phi''(u)=\frac{v'''}{v'}(v^{-1}(u)).$$
Set $t=v^{-1}(u)$ and $u=v(t)$ then
\begin{eqnarray*}\frac{a_3(t)}{\psi}&=&\frac{1}{N-1}\lambda_1^2v^2+\frac{N+1}{N-1}\lambda_1 v v''+\frac{N}{N-1}v''^2+(K-\lambda_1)v'^2-v'v'''
\\&=&-v'(v''-Tv'+\lambda_1v)'+\frac{1}{N-1}(v''-Tv'+\lambda_1v)(Nv''+Tv'+\lambda_1v)=0.\end{eqnarray*}
Here we have used that $T$ satisfies $T'=K+\frac{T^2}{N-1}.$
For $a_1,a_2$, we introduce $$X(t)=\lambda_1\frac{v(t)}{v'(t)},\quad\psi (u) =\exp(\int h(v(t))), \quad f(t)=-h(v(t))v'(t).$$
With these notations,
we have
\begin{eqnarray*}
f'=-h'v'^2+f(T-X),\end{eqnarray*}
\begin{eqnarray*}
v'|_{v^{-1}}^2 a_1\psi=2f(T-X)-\frac{N-2}{N-1}f^2-2f':=2(Q_1(f)-f'),\end{eqnarray*}
\begin{eqnarray*}
 a_2=f(\frac{3N-1}{N-1}T-2X)-2T(\frac{N}{N-1}T-X)-f^2-f':=Q_2(f)-f'.\end{eqnarray*}
 We may now use Corollary 3 in \cite{BQ}, which says that there exists a bounded function $f$ on $[\min u,\max u]\subset (\min v,\max v)$ such that $f'<\min\{Q_1(f),Q_2(f)\}$.

In view of (\ref{eq111}), we know that by our choice of $\psi$ and $\phi$,
 $P(x_0)\leq 0$, and hence $P(x)\leq 0$ for every $x\in M$, which leads to (\ref{grad comparison}).

\

\noindent{\bf Case 2.}   $\partial M\neq \emptyset$ and $x_0\in\partial M$.

To handle this case, we need to define a new normal vector field on $\partial M$, that is normal with respect to the Riemannian metric $g_{\nabla u}$. To be more general, for every $X\in TM$, there is a unique normal vector field $\nu_X$ such  that
\begin{eqnarray}\label{new normal}
g_X(\nu_X, Y)=0\hbox{ for any }Y\in T(\partial M),\quad g_X(\nu_X,\nu_X)=1,\quad g_\nu(\nu,\nu_X)>0.
\end{eqnarray}
A simple calculation shows that \begin{eqnarray}
g_X(\nu,\nu_X)>0.
\end{eqnarray}
Indeed, let $\nu_X=Z+a\nu$ for some $a\in\mathbb{R}$ and $Z\in T(\partial M)$. (\ref{new normal}) tells that $a>0$. Hence $g_X(\nu,\nu_X)=g_X(\frac{1}{a}(\nu_X-Z),\nu_X)=\frac{1}{a}>0$.
%Hence $\frac{\partial P}{\partial V}(x_0)\geq 0$.

The following Lemma follows directly from the definition of $\nu$ and $\nu_X$.
\begin{lemma}\label{lem1}Let $X,Y\in TM$. Then $$g_\nu(\nu,Y)=0\Leftrightarrow Y\in T(\partial M)\Leftrightarrow g_X(\nu_X,Y)=0.$$
\end{lemma}\qed

Define four sets $$T^\nu_\pm M:=\{Y\in TM: g_\nu(\nu,Y)>0(<0)\}$$ and $$T^{\nu_X}_\pm M:=\{Y\in TM: g_X(\nu_X,Y)>0(<0)\}.$$

We have the following simple but important observation, which may be familiar to expects.
\begin{lemma}\label{lem2}
$T^\nu_+M=T^{\nu_X}_+M,\quad T^\nu_-M=T^{\nu_X}_-M$.\end{lemma}

\begin{proof} We first claim that either $T^\nu_+M\subset T^{\nu_X}_+M$ or $T^\nu_+M\subset T^{\nu_X}_-M$. 
Otherwise, there are two vector fields $Y_1,Y_2\in T^\nu_+M$, such that $g_X(\nu_X,Y_1)>0$ and $g_X(\nu_X,Y_2)<0$.
Then by the continuity of $g_X(\nu_X,\cdot)$ in $T^\nu_+M$, there exists $\overline{Y}\in  T^\nu_+M$ with $g_X(\nu_X,\overline{Y})=0$, which means $g_\nu(\nu, \overline{Y})=0$ from Lemma \ref{lem1}. A contradiction.
Taking into consideration that $\nu\in  T^{\nu_X}_+M$, we see that $T^\nu_+M\subset T^{\nu_X}_+M$. 
A similar argument implies that $T^{\nu_X}_+M\subset T^{\nu}_+M$. 
The second equivalence follows in a similar way.\end{proof}

%Lemma \ref{lem1} and Lemma \ref{lem3} imply the following Lemma

%\begin{lemma}The following statements are equivalent.
%\begin{itemize}
% \item [(1)] $\partial M$ is convex.
%\item [(2)]  $g_\nu(\nu, D_Y^Y Y)\leq 0$ for any $Y\in T(\partial M)$.
%\%item[(3)] For a 
%\end{itemize}

%\end{lemma}

Return to the case when $P$ attains its maximum at $x_0\in\partial M$. If $\nabla u(x_0)=0$, nothing needs to be proved. Thus we assume $x_0\in M_u$. Recall that $P\in C^\infty(M_u)$.
Since $\nu_{\nabla u}$ points outward due to its definition, by taking normal derivative of $P$ with respect to $\nu_{\nabla u}$, we have $$D P(\nu_{\nabla u})(x_0)\geq 0.$$

On one hand, the Neumann boundary condition $\nabla u\in T(\partial M)$ implies that $$g_{\nabla u}(\nu_{\nabla u},\nabla u)(x)=0,$$ or equivalently,
$$Du(\nu_{\nabla u})(x)=0\hbox{ for }x\in \partial M.$$
Thus we have \begin{eqnarray}\label{normal derivative} D P(\nu_{\nabla u})(x_0)=\frac12\psi(u)(D(F^2(\nabla u))(\nu_{\nabla u}))(x_0).\end{eqnarray}

On the other hand, using (\ref{compatible}) and the symmetry of $\nabla^2 u$, we have
\begin{eqnarray}\label{eq00}
 D(F^2(\nabla u))(\nu_{\nabla u}) &=&  D(g_{\nabla u}(\nabla u,\nabla u))(\nu_{\nabla u})\nonumber\\
&=&2 g_{\nabla u}(D_{\nu_{\nabla u}}^{\nabla u} (\nabla u), \nabla u)
= 2g_{\nabla u}(D_{\nabla u}^{\nabla u} (\nabla u), \nu_{\nabla u}).
\end{eqnarray}

By the convexity of $\partial M$, for any $X\in T(\partial M)$, $g_\nu(\nu, D_X^X X)\leq 0$.
 In particular, set $X=\nabla u$, we know that
\begin{eqnarray}\label{convexity}
g_\nu(\nu, D_{\nabla u}^{\nabla u} (\nabla u))\leq 0.
\end{eqnarray}
It follows from Lemma \ref{lem1} and \ref{lem2} that (\ref{convexity}) is equivalent to
\begin{eqnarray}\label{convexity1}
g_{\nabla u}(\nu_{\nabla u}, D_{\nabla u}^{\nabla u} (\nabla u))\leq 0.
\end{eqnarray}

Combining (\ref{normal derivative}), (\ref{eq00}) and (\ref{convexity1}), 
we conclude that $D P(\nu_{\nabla u})(x_0)\le 0$, and hence $D P(\nu_{\nabla u})(x_0)=0$.
The tangent derivatives of $P$ obviously vanish due to its maximality. Hence we have also 
$$\nabla P(x_0)=0.$$ 
%It's also the case that (\ref{second derivative N}) holds. 
Thus the proof for Case 1  works  in this case. This finishes the proof of Theorem \ref{Comparison thm}.
\end{proof}

Another ingredient is a comparison theorem for the maxima of the eigenfunctions.

\begin{theorem}\label{Comparison thm2}

Let $(M,F,m),\lambda_1$  be as in Theorem \ref{main thm} and $1<N<\infty$.
Let $v=v_{K,N}$ be a solution of the 1-D model problem on some interval $(a,b)$
$L_{K,N}v=-\lambda_1 v$, with initial data $v(a)=-1,v'(a)=0$, where
\begin{equation*}
a=\left\{
\begin{array}{lll}
-\frac{\pi}{2\sqrt{K/(N-1)}} &\hbox{ for }K>0,\\
0 &\hbox{ for }K\leq 0
\end{array}\right.
\end{equation*}
and $b=b(a)$ be the first number after $a$ with $v'(b)=0$. Denote $m_{K,N}=v_{K,N}(b)=\max(v)$.
Assume that $\lambda_1>\max\{\frac{KN}{N-1},0\}$ and $\min(u)=-1$.
 Then $\max u\geq m_{K,N}.$
\end{theorem}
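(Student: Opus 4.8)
\textbf{Proof proposal for Theorem \ref{Comparison thm2}.}

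The plan is to argue by contradiction, following the strategy of Bakry--Qian \cite{BQ}, but replacing the use of the Hessian of $u$ at an interior critical point (which is unavailable since $u$ is only $C^{1,\alpha}$ on all of $M$) by the gradient comparison Theorem \ref{Comparison thm}. So suppose $\max u = m' < m_{K,N}$. First I would normalize and rescale: since $\min u = -1$ and $\lambda_1 > \max\{KN/(N-1),0\}$, one can find an interval $(a,b')$ with $b' < b$ and a solution $\tilde v$ of the $1$-D model $L_{K,N}\tilde v = -\lambda_1 \tilde v$ on $(a,b')$ with $\tilde v(a) = -1$, $\tilde v'(a) = 0$, $\tilde v' > 0$ on $(a,b')$, and $\tilde v(b') = m'= \max u$; this uses the monotone dependence of the range $[\,-1,\tilde v(b')\,]$ and of the nodal length on the endpoint, together with the fact that $\tilde v(b') \to m_{K,N}$ as $b' \to b$ and $\tilde v(b')$ can be made as close to $-1$ as we wish (here the hypothesis $\lambda_1 > KN/(N-1)$ guarantees that the relevant ODE is genuinely oscillatory so that such $\tilde v$ exist; this is exactly the Sturm-type analysis carried out in \cite{BQ}). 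Thus $[\min u, \max u] = [-1, m'] \subseteq [\min \tilde v, \max \tilde v] = [-1,m']$, with equality of the two ranges.

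Next I would apply the gradient comparison Theorem \ref{Comparison thm} with this $\tilde v$: the inclusion $[\min u, \max u]\subset[\min\tilde v,\max\tilde v]$ holds (with equality), so
\begin{equation*}
F(x,\nabla u(x)) \le \tilde v'\big(\tilde v^{-1}(u(x))\big) \qquad \text{for all } x \in M.
\end{equation*}
Now I would integrate this along a minimizing (forward) geodesic. Pick points $x_-, x_+ \in M$ with $u(x_-) = \min u = -1$ and $u(x_+) = \max u = m'$, and let $\gamma:[0,\ell]\to M$ be a unit-speed minimizing geodesic from $x_-$ to $x_+$, so $\ell = d(x_-,x_+) \le d$. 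Along $\gamma$, the chain rule gives $\frac{d}{ds} u(\gamma(s)) = Du(\dot\gamma(s)) \le F^*(Du)\,F(\dot\gamma) = F(\nabla u)(\gamma(s))$, hence using the gradient comparison,
\begin{equation*}
\frac{d}{ds}\, u(\gamma(s)) \le \tilde v'\big(\tilde v^{-1}(u(\gamma(s)))\big).
\end{equation*}
Writing $u(\gamma(s)) = \tilde v(\theta(s))$ for the lift $\theta(s) = \tilde v^{-1}(u(\gamma(s)))$ (legitimate since $u(\gamma(s))\in[-1,m']$ is in the range of $\tilde v$, and $\tilde v$ is strictly increasing), this becomes $\tilde v'(\theta)\,\theta' \le \tilde v'(\theta)$, i.e. $\theta'(s) \le 1$. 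Since $\theta$ runs from $a$ to $b'$ as $s$ runs from $0$ to $\ell$, integrating yields $b' - a = \theta(\ell) - \theta(0) \le \ell \le d$. But by the definition of $b$ and the strict inequality $b' < b$, this would force $d > b' - a$; the point is to show the reverse, so let me invert the logic.

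The cleaner way to package the contradiction: apply the gradient comparison with the \emph{actual} model solution $v = v_{K,N}$ on $(a,b)$ — which is legitimate precisely when $[\min u, \max u] = [-1, m'] \subseteq [-1, m_{K,N}] = [\min v, \max v]$, i.e. exactly when $m' \le m_{K,N}$ — and integrate along $\gamma$ as above to get $\theta' \le 1$ with $\theta$ now running from $a$ to $v^{-1}(m')$. Hence
\begin{equation*}
d \;\ge\; \ell \;\ge\; v^{-1}(\max u) - a .
\end{equation*}
On the other hand, the $1$-D analysis of \cite{BQ} shows that the map sending the right endpoint $c$ of a nodal interval $(a,c)$ (with $v'(a)=v'(c)=0$) to $\max\{v \text{ on } (a,c)\}$ together with the corresponding first eigenvalue controls $d$ sharply; concretely, since $\lambda_1(K,N,d)$ is by definition the eigenvalue for the nodal interval of length $d$, and $\lambda_1 \ge \lambda_1(K,N,d)$ would be what we want for the main theorem, here instead I directly compare: if $\max u < m_{K,N}$, then $v^{-1}(\max u) < b$, so the above gives only $d \ge v^{-1}(\max u) - a$, which is consistent and yields no contradiction by itself — \textbf{the real input} is that one must also integrate from the maximum point back down, or equivalently exploit $\int_M u\,dm = 0$. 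So the final step I would carry out is: use that $u$ has mean zero to force $\max u$ not too small relative to the $1$-D model, by comparing the distribution function of $u$ (controlled via the co-area formula and the gradient bound $F(\nabla u) \le v'(v^{-1}(u))$) against that of $v$ under $\mu_{K,N}$, which is exactly the monotonicity/volume-comparison argument of \cite{BQ}.

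\textbf{Main obstacle.} The genuinely delicate point, and the one I would spend the most care on, is the passage from the pointwise gradient comparison to a statement about $\max u$: in \cite{BQ} this is where the Hessian bound near an interior critical point of $u$ enters (to control the behavior of $u$ near its extrema and run the ODE-comparison for the nodal length). Since $u \in C^{1,\alpha}$ only, that Hessian bound is unavailable, so I would instead derive the needed conclusion purely from Theorem \ref{Comparison thm} by the geodesic-integration argument sketched above — carefully checking that the lift $\theta(s) = v^{-1}(u(\gamma(s)))$ is Lipschitz (it is, since $u$ is $C^1$, $\gamma$ is smooth, and $v^{-1}$ is smooth on the interior of the range, with the endpoint behavior handled by the $c \to 1$ rescaling trick already used in the proof of Theorem \ref{Comparison thm}) and that $\theta' \le 1$ a.e. suffices. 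The remaining ingredients — the existence and monotonicity properties of the $1$-D solutions $v_{K,N}$, and the reduction that makes $\max u \ge m_{K,N}$ equivalent to the contradiction hypothesis failing — are quoted verbatim from \cite{BQ}, since, as the authors note, the $1$-D model is identical to the Riemannian one.
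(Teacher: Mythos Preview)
Your proposal conflates two distinct arguments. The global geodesic integration you spend most of the space on --- taking $\gamma$ from a minimum point to a maximum point and bounding $\theta'\le 1$ --- is the argument for the \emph{main} Theorem~\ref{main thm}, not for Theorem~\ref{Comparison thm2}; you yourself note twice that it yields no contradiction with $\max u<m_{K,N}$. You then gesture at the Bakry--Qian distribution-function comparison as ``the real input,'' which is correct, but when you name the main obstacle (no Hessian at the minimum) your proposed fix is again ``the geodesic-integration argument sketched above.'' That global argument does not address the obstacle.

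What is actually needed, and what the paper does, is a \emph{local} use of the gradient comparison near the minimum point $x_0$. From $F(\nabla u)\le v'(v^{-1}(u))$ and the ODE fact $v''(a)=\lambda_1/N>0$ (so $v'(s)\le C(s-a)$ near $a$), one shows that for $x\in B^\pm(x_0,\delta)$ one has $u(x)\le -1+C\delta^2$; hence $B^\pm(x_0,C\varepsilon^{1/2})\subset\{u\le -1+\varepsilon\}$. This is the Hessian-free replacement for \cite{BQ}'s step, and it is not the integration from min to max. Combined with the ratio monotonicity of $R(c)$ and the $1$-D estimate $\mu_{K,N}(\{v\le -1+\varepsilon\})\le C\varepsilon^{N/2}$, this yields $m(B^\pm(x_0,r))\le Cr^N$. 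The contradiction then comes from a step you do not mention at all: perturb $N$ to $N'>N$ (using continuity of $m_{K,N}$ in $N$) to obtain $m(B^\pm(x_0,r))\le Cr^{N'}$, which violates the Finsler volume comparison $m(B^\pm(x_0,r))\ge Cr^N$ under $Ric_N\ge K$. Without the local ball-in-sublevel-set estimate and the $N\to N'$ perturbation, your outline does not close.
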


\begin{proof}
We argue by contradiction. Suppose $\max(u)<m_{K,N}$. Then $[\min u,\max u]\subset [\min v,\max v]$.
The condition $\lambda_1>\max\{\frac{KN}{N-1},0\}$ ensures that \begin{equation*}
b\leq\left\{
\begin{array}{lll}
\frac{\pi}{2\sqrt{K/(N-1)}} &\hbox{ for }K>0,\\
\infty &\hbox{ for }K\leq 0,
\end{array}\right.
\end{equation*} which in turn ensures that $v'>0$ in $(a,b)$. Hence we could apply Theorem \ref{Comparison thm} for $u$ and $v$.

The same argument as Theorem 12 in \cite{BQ} implies that the ratio
\begin{eqnarray*}
R(c)=\frac{\int_{\{u\leq c\}} u dm}{\int_{\{v\leq c\}} v d\mu_{K,N}}
\end{eqnarray*}
is increasing on $[\min(u),0]$ and decreasing on $[0,\max(u)]$.
Therefore, for $c\leq -\frac12$, we have that
\begin{eqnarray}\label{eq10}
\quad m(\{u\leq c\})\leq 2\int_{\{u\leq c\}}|u| dm\leq 2R(0)\int_{\{v\leq c\}} |v| d\mu_{K,N}\leq 2R(0)\mu_{K,N}(\{v\leq c\}).
\end{eqnarray}
Let $c=-1+\varepsilon$ for $\varepsilon>0$ small.
A simple calculation gives that $v''(a)=\frac{\lambda_1}{N}$. Hence for $t$ close to $a$, $v''(t)$ has positive lower and upper bound.
Together with $v'(a)=0$, we see that $v(t)-v(a)\geq C(t-a)^2$. Thus if $t\in\{v\leq -1+\varepsilon\}$,  then $t\in (a,a+C\varepsilon^\frac12)$. It follows that
\begin{eqnarray}\label{eq11}\mu_{K,N}(\{v\leq -1+\varepsilon\})\leq \mu_{K,N}((a,a+C\varepsilon^\frac12)) \leq C\varepsilon^{N/2}.\end{eqnarray}

On the other hand, we shall prove that
\begin{eqnarray}\label{eq12}m(\{u\leq -1+\varepsilon\})\geq m(B^\pm(x_0,C\varepsilon^\frac12)).\end{eqnarray}
Let $x_0\in M$ be such that $u(x_0)=-1$. For any $x\in B^\pm(x_0,\delta)$ with $\delta$ small,  $u(x)$ is close to $-1$ and $s:=v^{-1}(u(x))$ is close to $a$. Thus we see again from the upper bound of $v''$ and $v'(a)=0$ that $v'(s)\leq C(s-a)$. Therefore, we  have from Theorem \ref{Comparison thm} that $F(x,\nabla u(x))\leq v'(v^{-1}(u(x)))\leq C(s-a)$ and
$F(x,\nabla v^{-1}(u(x)))=(v^{-1})'(u(x))F(x,\nabla u(x))\leq 1.$
In turn, we get
$$s-a=v^{-1}(u(x))-v^{-1}(u(x_0))\leq F(\tilde{x},\nabla v^{-1}(u(\tilde{x})))\delta\leq \delta,$$
and
$$u(x)\leq u(x_0)+F(\tilde{\tilde{x}},\nabla u(\tilde{\tilde{x}})) \delta \leq -1+C(s-a)\delta \leq -1+ C\delta^2,$$
for some $\tilde{x},\tilde{\tilde{x}}\in B^\pm(x_0,\delta)$.
Let $\varepsilon=C\delta^2$, we conclude $B^\pm(x_0,\delta)\subset \{u\leq -1+\varepsilon\}$, which implies (\ref{eq12}).

Combining (\ref{eq10}), (\ref{eq11}) and (\ref{eq12}), we see that there exists some constant $C>0$ such that
\begin{eqnarray}\label{eq13} m(B^\pm(x_0,r))\leq Cr^N.\end{eqnarray}
This will lead to a contradiction. In fact,
since $\max(u)<m_{K,N}$ and $m_{K,N}$ is continuous with respect to $(K,N)$, we also have that $\max(u)<m_{K,N'}$ for any $N'>N$ close to $N$. Argued as before, we will obtain (\ref{eq13}) with $N'$ instead of $N$, i.e.
\begin{eqnarray}\label{eq14} m(B^\pm(x_0,r))\leq Cr^{N'}.\end{eqnarray} However, the volume comparison theorem  for Finsler manifolds under the assumption of lower bound for $Ric_N$ (see \cite{Oh}, Th. 7.3), implies that
$m(B^\pm(x_0,r))\geq Cr^N$ for $r>0$ small. A contradiction  to (\ref{eq14}).
The previous argument also works in the case $x_0\in \partial M$. The proof is completed.
\end{proof}

Besides the comparison theorem on the gradient and maxima, in order to prove Theorem \ref{main thm}, we also need some properties of the 1-D models, which has been extensively studied in \cite{BQ}. We refer to \cite{BQ} for the elementary properties, meanwhile we list two of them, one presents the full range of the maximum function $m_{K,N}$, the other reveals that the central interval has the lowest first Neumann eigenvalue.

\begin{lemma}[\cite{BQ}, Section 3]\label{lem1 BQ}
Assume $1<N<\infty$ ($N=\infty$ resp.) and fix $\lambda>\max\{\frac{KN}{N-1},0\}$. Let $v, m$ be as in Theorem \ref{Comparison thm2}. Then for any $k\in [m,\frac1m]$ ($(0,\infty)$, resp.), there exists an interval which has the first Neumann eigenvalue $\lambda$ and a corresponding eigenfunction $\tilde{v}$ such that $\min\tilde{v}=-1,\max\tilde{v}=k.$
\end{lemma}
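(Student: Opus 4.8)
The statement to prove is Lemma~\ref{lem1 BQ}, which is quoted as coming from \cite{BQ}, Section 3. It is a purely one-dimensional fact about the model operators $L_{K,N}$: for fixed admissible $\lambda$, the set of ratios $\max\tilde v/\min\tilde v$ realized by Neumann eigenfunctions on intervals, as the interval slides and stretches, fills up the full range $[m,1/m]$ (or $(0,\infty)$ when $N=\infty$).

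\textbf{Approach.}
The plan is to parametrize the eigenfunctions by their left endpoint. For $1<N<\infty$ and $K>0$ the operator $L_{K,N}$ is defined on $(-\tfrac{\pi}{2\sqrt{K/(N-1)}},\tfrac{\pi}{2\sqrt{K/(N-1)}})$; for $K\le 0$ it is defined (in the relevant ``central'' form $v''-Tv'$) on all of $\R$. Given a point $a$ in the domain, let $v_a$ be the solution of $L_{K,N}v_a=-\lambda v_a$ with $v_a(a)=-1$, $v_a'(a)=0$; let $b(a)$ be the first point $>a$ where $v_a'=0$ (the condition $\lambda>\max\{\tfrac{KN}{N-1},0\}$ is exactly what guarantees $b(a)$ exists and lies inside the domain, and that $v_a'>0$ on $(a,b(a))$, so $v_a$ is increasing there). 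Define $k(a):=v_a(b(a))=\max v_a$. First I would show $k$ is continuous in $a$ by continuous dependence of ODE solutions on parameters together with the fact that $b(a)$ depends continuously on $a$ (the zero of $v_a'$ at $b(a)$ is simple because $v_a''(b(a))=-\lambda v_a(b(a))\neq 0$, so the implicit function theorem applies). Then the image of $k$ is an interval, and the lemma reduces to identifying its endpoints.

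\textbf{Identifying the endpoints.}
The distinguished value $m=m_{K,N}$ in Theorem~\ref{Comparison thm2} is precisely $k(a_0)$ where $a_0$ is the left end of the domain ($a_0=-\tfrac{\pi}{2\sqrt{K/(N-1)}}$ for $K>0$, and $a_0=0$ for $K\le 0$ using the singular central model, i.e.\ the limiting symmetric interval about the origin). By the symmetry $t\mapsto -t$ of the central operator (note $T$ is odd), $-v_{-b}(-t)$ traces out the reflected eigenfunction, so the ratio produced at the ``right-most'' admissible configuration is $1/m$. As $a$ runs between these two extremes, the eigenfunction is supported on an interval that slides from being pinned at the left edge to being pinned (after reflection) at the right edge, and by monotonicity/continuity considerations on $k(a)$ one checks $k$ is monotone in $a$, so its range is exactly $[m,1/m]$. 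For $N=\infty$ with $K\neq 0$ the operator $v''-Ktv'$ lives on all of $\R$ and there is no edge constraint; letting $a\to-\infty$ forces $b(a)\to+\infty$ and $k(a)\to\infty$, while centering the interval gives $k=1$, so the range is $(0,\infty)$. For $K=0$, $N=\infty$ one uses the family $v''-cv'$ with the translation/scaling freedom in $c$ to obtain the same conclusion. Finally, rescaling $\tilde v$ so that $\min\tilde v=-1$, $\max\tilde v=k$ for each target $k$ in the range is automatic since $v_a$ already has $\min=-1$.

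\textbf{Main obstacle.}
The only real work is the boundary analysis: showing that the extreme values of $a$ genuinely produce the ratios $m$ and $1/m$, and in particular that $k(a)$ does not overshoot $1/m$ or fail to reach it — i.e.\ establishing monotonicity of $k$ in $a$ (equivalently, monotonicity of the first Neumann eigenvalue as a function of where the interval is placed, which is the ``central interval has lowest eigenvalue'' principle alluded to just after the statement). This is where one leans on the detailed ODE analysis in \cite{BQ}; the comparison arguments there (Sturm comparison for the weighted equation, plus the explicit behavior of $T$) do the job, and since the paper explicitly attributes this lemma to \cite{BQ} Section~3, it suffices to recall that the model equations here are identical to the Riemannian ones and invoke that analysis.
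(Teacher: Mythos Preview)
The paper does not give its own proof of this lemma: it is stated with the attribution ``\cite{BQ}, Section~3'' and the surrounding text explicitly says ``We refer to \cite{BQ} for the elementary properties, meanwhile we list two of them,'' so there is nothing to compare against beyond the citation itself.

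Your outline is consistent with the argument in \cite{BQ}: parametrize the Neumann eigenfunction by the left endpoint $a$, use continuous dependence of ODE solutions and the simplicity of the zero of $v_a'$ at $b(a)$ (since $v_a''(b(a))=-\lambda v_a(b(a))\neq 0$) to get continuity of $k(a)=v_a(b(a))$, exploit the oddness of $T$ to see that the reflected and rescaled eigenfunction $-v(-t)/m$ realizes the ratio $1/m$, and then invoke the intermediate value theorem. Two small comments. First, since the lemma only asserts existence for each $k\in[m,1/m]$, monotonicity of $k(a)$ is not needed; continuity together with the identification of the extreme values $m$ and $1/m$ already suffices, so the ``main obstacle'' you flag is milder than you suggest. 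Second, for $K\le 0$ the extreme configuration uses the singular model on $(0,\infty)$ (resp.\ $(-\infty,0)$), while intermediate intervals lie inside $\R$ and use the central model with $\tanh$ (or $T\equiv 0$); one has to check that the ratio $k$ varies continuously across this transition between models, which is handled in \cite{BQ} but which your sketch elides when you write ``the limiting symmetric interval about the origin.''
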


\begin{lemma}[\cite{BQ}, Th. 13]\label{lem2 BQ}
Let $\lambda_1(K,N,a,b)$ denotes the first Neumann eigenvalue of $L_{K,N}$ on the interval $(a,b)$. Then $\lambda_1(K,N,a,b)\geq \lambda_1(K,N,-\frac{b-a}{2},\frac{b-a}{2})=\lambda_1(K,N,b-a)$.
\end{lemma}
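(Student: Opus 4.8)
The plan is to realise $\lambda_1(K,N,a,b)$ variationally and to prove that, among intervals of a fixed length, it is smallest for the centred one, the point being that the drift $T$ is odd. I would use the non-singular normalisations of $L_{K,N}$ (the ones through which $\lambda_1(K,N,d)$ is defined in Theorem~\ref{main thm}); then $T$ is smooth on $\R$, satisfies $T'=K+\frac{T^2}{N-1}$, and is odd with $T(0)=0$, so that $T(s)\,T'(s)$ has the sign of $s$. If $T$ is constant --- the cases $K=0$, where $T\equiv0$, or $N=\infty$ with $T\equiv c$ --- then $L_{K,N}$ is translation invariant and the statement is an equality, so we may assume $T$ is nonconstant.

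First, fix $L=b-a$ and translate to the fixed interval $(0,L)$ by $t\mapsto t-a$. With $\rho_a(t):=\exp\!\big(-\int_0^tT(s+a)\,ds\big)$ the operator $w\mapsto w''-T(\cdot+a)w'$ is self-adjoint on $L^2((0,L),\rho_a\,dt)$, and
\[
\lambda_1(a):=\lambda_1(K,N,a,a+L)=\inf\left\{\frac{\int_0^L(w')^2\rho_a\,dt}{\int_0^Lw^2\rho_a\,dt}\ :\ w\not\equiv0,\ \int_0^Lw\,\rho_a\,dt=0\right\}.
\]
The substitution $t\mapsto-t$ together with $T(-t)=-T(t)$ identifies the eigenvalue problem on $(a,a+L)$ with the one on $(-L-a,-a)$; hence $\lambda_1(a)=\lambda_1(-L-a)$, i.e. $a\mapsto\lambda_1(a)$ is symmetric about $a=-L/2$. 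Since $\lambda_1(K,N,b-a)$ denotes $\lambda_1(-L/2)$, it suffices to prove that $\lambda_1$ is nondecreasing on $[-L/2,\infty)$.

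The next step is to differentiate the eigenvalue in $a$. Let $w=w_a$ be the (simple) minimiser, increasing, with a single interior zero, normalised by $\int_0^Lw^2\rho_a=1$; integrating its Euler--Lagrange equation shows the Lagrange multiplier of the mean-zero constraint vanishes, so the standard first-order formula, using $\partial_a\rho_a=-(T(\cdot+a)-T(a))\rho_a$ and the Rayleigh identity, gives $\lambda_1'(a)=-\int_0^L[(w')^2-\lambda_1w^2]\,T(t+a)\,\rho_a\,dt$. Since $[(w')^2-\lambda_1w^2]\rho_a=(w\,\rho_a\,w')'$ and $w'(0)=w'(L)=0$, an integration by parts yields $\lambda_1'(a)=\int_0^L T'(t+a)\,w(t)\,w'(t)\,\rho_a(t)\,dt$; eliminating $w$ via $w''=T(\cdot+a)w'-\lambda_1w$, integrating by parts once more, and using $T''=\frac{2TT'}{N-1}$ one is led to the clean identity
\[
\lambda_1'(a)=\frac{N+1}{2(N-1)\,\lambda_1}\int_0^L T'(t+a)\,T(t+a)\,\rho_a(t)\,w'(t)^2\,dt .
\]
Because $T(s)T'(s)$ has the sign of $s$, the integrand has the sign of $t+a$; hence $\lambda_1'(a)\ge0$ as soon as $a\ge0$, and $\lambda_1'(-L/2)=0$ because there $w$, $T(\cdot+a)$ and $\rho_a$ are respectively odd, odd and even about the midpoint.

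The main obstacle is the remaining range $-L/2<a<0$, where $t+a$ --- and with it the integrand --- changes sign inside $(0,L)$, at $t=-a\in(0,L/2]$, so that one must show the positive part still dominates. This is exactly the content of \cite{BQ}, Theorem~13: one uses the oddness of $T$ to pair the subinterval $(0,-a)$ with its reflection, reducing matters to the monotonicity of $w_a$ and to a comparison of $\rho_a$ and $w_a'$ on the two sides of $t=-a$, with $T'=K+\frac{T^2}{N-1}$ once more being what makes the estimate close. Since the one-dimensional models above are precisely those of \cite{BQ}, that argument applies unchanged, and we do not reproduce it here.
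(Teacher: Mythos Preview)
The paper does not prove this lemma at all: it is stated with the attribution ``\cite{BQ}, Th.~13'' and used as a black box, so there is no ``paper's own proof'' to compare against. Your write-up therefore goes strictly beyond what the paper does.

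That said, the part you do carry out is correct. The symmetry $\lambda_1(a)=\lambda_1(-L-a)$ via oddness of $T$ is right; your Hellmann--Feynman computation of $\lambda_1'(a)$ is right (the Lagrange multiplier of the mean-zero constraint does vanish, and the identity $[(w')^2-\lambda_1 w^2]\rho_a=(w\rho_a w')'$ is exactly the eigen-equation); and the further reduction to
\[
\lambda_1'(a)=\frac{N+1}{2(N-1)\lambda_1}\int_0^L T(t+a)\,T'(t+a)\,(w')^2\rho_a\,dt
\]
checks out using $T''=\tfrac{2}{N-1}TT'$. This cleanly gives $\lambda_1'(a)\ge 0$ for $a\ge 0$ and $\lambda_1'(-L/2)=0$.

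Where you stop --- the range $-L/2<a<0$ --- is indeed the substantive part, and you, like the paper, hand it back to \cite{BQ}. So your proposal is not really a proof but a (correct) preamble followed by the same citation. Two small caveats if you want to turn this into a self-contained argument: first, the lemma in the paper covers \emph{all} the one-dimensional models listed at the start of Section~3, including the singular drifts ($\coth$ and $(N-1)/t$) used when $K\le 0$ in Theorem~\ref{Comparison thm2} and Lemma~\ref{lem1 BQ}, whereas you explicitly restrict to the nonsingular $T$'s; second, for $N=\infty$ your displayed formula has $N-1$ in the denominator and needs the obvious limiting interpretation. Neither point is fatal, but both would have to be addressed before calling this a proof rather than a citation with commentary.
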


We now in a position to prove Theorem \ref{main thm}.

\noindent\textit{Proof of Theorem \ref{main thm}:}
Without loss of generality, we may assume that $\min u=-1$ and $0<\max u:=k\leq 1$. It was shown by Ohta \cite{Oh}, Cor. 8.5 that $\lambda_1\geq \frac{NK}{N-1}$ in the case of $K>0$. Choose $\tilde{K}<K$ close to $K$, we have $\lambda_1>\max\{\frac{\tilde{K}N}{N-1},0\}$.
Therefore, Theorem \ref{Comparison thm2} and Lemma \ref{lem1 BQ} imply that there exists an interval $[a,b]$ which has the first Neumann eigenvalue $\lambda_1$ and a corresponding eigenfunction $v$ such that $\min v=-1=\min u,\max v=\max u=k.$
Choose $x_1,x_2\in M$ with $u(x_1)=\min u,u(x_2)=k$ and
$\gamma(t):[0,1]\to M$ the minimal geodesic from $x_1$ to $x_2$. Consider the subset $I$ of [0,1] such that $\frac{d}{dt}u(\gamma(t))\geq 0$. By using Theorem \ref{Comparison thm}, we have
\begin{eqnarray*}d&\geq &  \int_{0}^1 F(\dot{\gamma}(t))dt\geq \int_I F(\dot{\gamma}(t))dt \\&\geq &\int_{0}^1 \frac{1}{F^*(D u)}Du(\dot{\gamma}(t))
dt=\int_{-1}^k \frac{1}{F(\nabla u)}du\\&\geq &\int_{-1}^k \frac{1}{v'(v^{-1}(u))}du=\int_{a}^{b} dt
=b-a.\end{eqnarray*}
A general property says that $\lambda_1 (\tilde{K},N,d)$ is monotone decreasing with respect to $d$. Hence $\lambda_1 (\tilde{K},N,b-a)\geq \lambda_1 (\tilde{K},N,d)$.
Finally, It follows from Lemma \ref{lem2 BQ} that $$\lambda_1\geq \lambda_1 (\tilde{K},N,b-a)\geq\lambda_1 (\tilde{K},N,d).$$ By letting $\tilde{K}\to K$, we get the conclusion $\lambda_1\geq\lambda_1 (K,N,d)$. \qed

\

\end{document}